\documentclass[11pt,a4paper]{article}

\usepackage[T1]{fontenc}
\usepackage[utf8]{inputenc}
\usepackage{amsmath,amssymb,amsthm}
\usepackage{mathtools}
\usepackage[margin=2.6cm]{geometry}
\usepackage{tikz}
\usepackage{hyperref}
\hypersetup{hidelinks}

\theoremstyle{plain}
\newtheorem{theorem}{Theorem}[section]
\newtheorem{lemma}[theorem]{Lemma}
\newtheorem{proposition}[theorem]{Proposition}
\newtheorem{corollary}[theorem]{Corollary}
\newtheorem{conjecture}[theorem]{Conjecture}
\theoremstyle{definition}
\newtheorem{definition}[theorem]{Definition}

\theoremstyle{remark}

\newtheorem{question}[theorem]{Question}

\newcommand{\Sym}{\operatorname{Sym}}
\newcommand{\Cay}{\operatorname{Cay}}
\newcommand{\Stab}{\operatorname{Stab}}

\newcommand{\id}{\mathbf{1}}

\newcommand{\coxrow}[2]{%
\begin{tikzpicture}[baseline=-0.5ex, x=0.62cm]
  \pgfmathtruncatemacro{\m}{#1-1}%
  \foreach \i in {1,...,\m}{%
    \pgfmathtruncatemacro{\j}{\i+1}%
    \ifnum\i<\m \draw[line width=0.5pt] (\i,0)--(\j,0);\fi
  }%
  \foreach \i in {1,...,\m}{%
    \ifnum#2=1
      \pgfmathtruncatemacro{\f}{mod(\m-\i,2)}%
    \else
      \pgfmathtruncatemacro{\f}{0}%
    \fi
    \ifnum\f=0 \fill (\i,0) circle (2.1pt);
    \else \draw[fill=white,line width=0.5pt] (\i,0) circle (2.1pt);\fi
  }%
\end{tikzpicture}}

\title{Genlex Gray codes for $S_n$ with the fewest operations:\\
classification and symmetry}
\author{Yehonathan Sharvit\\[2pt]
{\small Independent researcher \quad \texttt{viebel@gmail.com}}}
\date{August 2026}

\begin{document}
\maketitle

\begin{abstract}
A Gray code for $S_n$ is \emph{genlex} when words sharing a suffix are
consecutive. We determine the genlex Gray codes for $S_n$ that use the fewest
possible operations: Zaks' recursion generalises to a superfactorial family of
such codes, and no code outside this family attains the minimum. Every code
in the family closes into a cycle, and the cycle is invariant under a group of
left translations, cyclic of order $n$ or dihedral of order $2n$ according to
an explicit criterion on the operations.
In the pancake graph the family reduces to a single member, the classical Zaks
order; with respect to the standard parabolic chain of $W(A_{n-1})$, that order
attains one extreme of total Coxeter length.
\end{abstract}

\section{Introduction}

\subsection*{The problem}

A \emph{Gray code} for the symmetric group $S_n$ lists the $n!$ permutations of
$\{1,\dots,n\}$, written in one-line notation, so that consecutive words differ
by an operation drawn from a fixed set; M\"utze \cite{mutze} surveys the
subject. Following Walsh \cite{walsh}, a listing is \emph{genlex} when words
agreeing in their final letters are consecutive. The operations are the
generators of the Cayley graph in which the code is a walk, and we use the two
terms interchangeably in that sense.

This paper determines the genlex Gray codes for $S_n$ that use the fewest
distinct operations. The minimum is $n-1$ (Lemma~\ref{lem:budget}), and no
closure hypothesis is made: a genlex Gray code attaining the minimum
necessarily closes into a cycle (Corollary~\ref{cor:closure}). The
resulting family is superfactorially large yet completely rigid --- each member
is determined by one free choice at each level of a subgroup chain --- and its
two ends, ordered by total Coxeter length, are occupied by classical objects.

\subsection*{Results}

Let $G_k=\Sym\{1,\dots,k\}$, so that
$\{1\}=G_1<G_2<\cdots<G_n=S_n$ is the chain of standard parabolic subgroups of
the Coxeter group $W(A_{n-1})=S_n$ along a maximal flag. A \emph{tower}
(Definition~\ref{def:tower}) is a choice of one generator per layer,
$g_k\in G_k\setminus G_{k-1}$, whose successive quotients rotate the cosets at
each level.

\begin{itemize}
\item \textbf{Classification} (Theorem~\ref{thm:class}). Genlex Hamiltonian
cycles on $S_n$ with $n-1$ generators are in bijection with towers
(Definition~\ref{def:tower}). There are
$1!\,2!\cdots(n-1)!$ of them, one for each independent choice of a $k$-cycle at
each level; no compatibility condition between levels arises. Neither the
cyclicity of the listing nor that of the transversals is assumed: both are
conclusions.

\item \textbf{Structure} (\S\ref{sec:structure}). Zaks' recursion, read over the
alphabet $g_2,\dots,g_n$, closes into a cycle for every tower; the reason is a
compensation lemma. The resulting word is a palindrome, and ranking is the plain
factorial-base monomial $u_n^{c_n}\cdots u_2^{c_2}$, with no reflected
correction.

\item \textbf{Symmetry} (\S\ref{sec:symmetry}). The left-regular stabilizer of
the cycle is cyclic of order $n$ or dihedral of order $2n$, the second case
occurring exactly when every $g_k$ is an involution. In the language of Gregor,
Merino and M\"utze \cite{gmm}, every tower cycle is $n$-symmetric.

\item \textbf{Pancake} (\S\ref{sec:pancake}). Inside the pancake graph the
family collapses: Zaks' cycle is the unique genlex Hamiltonian cycle up to left
translation.

\item \textbf{Extremes of Coxeter length} (\S\ref{sec:coxeter}). Ordered by total
Coxeter length $\Lambda=\sum\ell(g_k)$, the family has Zaks' tower at the
maximum and, conjecturally, a tower built from the bipartite factors of a
Coxeter element at the minimum.

\end{itemize}

\subsection*{Antecedents}
\label{sec:antecedents}

The chain and its coset representatives are standard, and the wider setting is
surveyed in \cite{mutze}. In the language of Sims tables \cite{sims} and Knuth's
Algorithm~G \cite[\S7.2.1.2]{knuth4a}, a tower is exactly a \emph{cyclic} Sims
table --- one full cycle per level in place of an arbitrary transversal --- and
in that framework the forward half of Theorem~\ref{thm:class} is a
specialization: the listing exists, and the ranking monomial is the usual
mixed-radix decomposition along the chain. The same scheme was reached
independently by Kokosi\'nski \cite{kokosinski}, from Hall--Paige's coset
partitions and transversal theory rather than from Sims: choice functions along
the chain, a lexicographic order on them, and ranking and unranking for an
arbitrary choice of transversal --- \emph{any partition rule}, in his phrase.
His transversals are the transposition transversals of Hall--Paige, so his
listings are neither genlex nor Gray codes; the criterion of the present paper
selects, inside his space of partition rules, exactly the cyclic ones.

The contributions of the present paper are of a different nature: the converse
of the classification, the closure into a cycle, and the symmetry and Coxeter
statements, none of which concerns generation procedures. For Zaks' tower
itself, the dihedral left-regular stabilizer was determined in
\cite{sharvitbenjo}, together with its identification inside the full
automorphism group of the pancake graph; the present paper extends the
left-regular statement to the entire family. That the genlex
hypothesis is essential can be seen from Ehrlich's swap method
\cite[Algorithm~E]{knuth4a}, \cite{ehrlich}, which spends exactly one generator
per layer --- the star transpositions --- and is not genlex.

What Theorem~\ref{thm:class} calls Zaks' order, the case $g_k=r_k$, has a longer
history than that name suggests. It is the algorithm of Ord-Smith
\cite{ordsmith}, whose listing agrees with Zaks' permutation by permutation, and
Knuth traces it further back still, to Kl\"ugel in 1796
\cite[\S7.2.1.7]{knuth4a}. The order predates the combinatorial literature:
the letter-permutation rules that Abraham Abulafia sets out in his
\emph{Or ha-Sekhel} in the thirteenth century generate this same sequence for
every $n$ \cite{abulafia}. We retain the name ``Zaks' tower'' for
definiteness; what is due to Zaks \cite{zaks} in this circle of ideas is the
$O(1)$ successor rule and the recursion in the form used below.

\section{Preliminaries}
\label{sec:prelim}

Permutations are written in one-line notation, $\pi=\pi_1\pi_2\cdots\pi_n$, and
operations act on \emph{positions}, on the right:
\[
  (\pi\cdot g)_i=\pi_{g(i)}.
\]
We write $\ell(\pi)$ for the Coxeter length of $\pi$ in $W(A_{n-1})=S_n$ with
simple reflections $s_i=(i\ \, i{+}1)$; as usual $\ell(\pi)=\operatorname{inv}(\pi)$,
the number of inversions. For a subset $J$ of simple reflections, $W_J$ is the
corresponding standard parabolic subgroup and $w_{0,J}$ its longest element. We
take $J_k=\{s_1,\dots,s_{k-1}\}$, so that
\[
  W_{J_k}=G_k=\Sym\{1,\dots,k\},\qquad
  \{1\}=G_1<G_2<\cdots<G_n=S_n ,
\]
a maximal chain of standard parabolic subgroups, with $[G_k:G_{k-1}]=k$.

\begin{definition}
A listing of $S_n$ is \emph{genlex} when, for every $k$, the words agreeing in
their last $n-k$ letters are consecutive. For a cyclic listing, read
``consecutive'' as ``occupying a contiguous arc''.
\end{definition}

The set of words agreeing with $\pi$ in positions $k+1,\dots,n$ is exactly the
coset $\pi G_k$, so the genlex condition is a condition on the chain above: each
of its cosets must be an arc. We call $\pi G_k$ a \emph{$k$-block}.

\begin{lemma}[minimum number of operations]
\label{lem:budget}
The layers $G_k\setminus G_{k-1}$, $k=2,\dots,n$, partition $S_n\setminus\{1\}$.
A genlex Hamiltonian listing of $S_n$, realised as a walk in a Cayley graph,
uses a generator from each layer, hence at least $n-1$ generators.
\end{lemma}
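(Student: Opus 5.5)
The partition claim is immediate from the chain $\{1\}=G_1<G_2<\cdots<G_n=S_n$. Every $\pi\neq 1$ lies in $G_n$ but not in $G_1$, so there is a least $k\ge 2$ with $\pi\in G_k$, and then $\pi\in G_k\setminus G_{k-1}$. The layers are disjoint because the chain is nested: if $\pi\in G_k\setminus G_{k-1}$ and $j<k$, then $\pi\notin G_{k-1}\supseteq G_j$. The content of the lemma is therefore the second sentence, and the plan is to exhibit, for each $k=2,\dots,n$, one step of the walk whose generator lies in layer $k$.

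Fix $k$ and consider the $k$-blocks $\pi G_k$. By the genlex condition these are arcs of the listing (contiguous intervals in the linear case). Since $[S_n:G_k]\ge 1$, pick any $(k-1)$-block $\sigma G_{k-1}$ contained in some $k$-block $B=\sigma G_k$. Because $[G_k:G_{k-1}]=k\ge 2$, the block $B$ contains at least two distinct $(k-1)$-blocks, each of which is a contiguous sub-arc of the arc $B$. An interval partitioned into at least two sub-intervals has at least one boundary point. Hence there are consecutive words $\alpha,\beta$ in the listing, both in $B$, with $\alpha G_{k-1}\neq\beta G_{k-1}$. Let $g=\alpha^{-1}\beta$ be the generator used at that step, so that $\beta=\alpha\cdot g$ under the right action on positions. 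Then $\alpha G_k=\beta G_k$ gives $g\in G_k$, and $\alpha G_{k-1}\neq\beta G_{k-1}$ gives $g\notin G_{k-1}$. So $g\in G_k\setminus G_{k-1}$.

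Doing this for each $k$ yields generators in $n-1$ distinct layers. Since the layers are pairwise disjoint, these generators are pairwise distinct, which gives the bound of $n-1$.

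The only point needing care, and the closest thing to an obstacle, is making sure the boundary between two $(k-1)$-sub-blocks is realised by an actual step of the walk lying inside $B$. In a linear listing this holds because $B$ is an interval and its sub-blocks are sub-intervals. In a cyclic reading with $k=n$, $B$ is the whole cycle, and there is still an internal boundary because there are at least two $(n-1)$-blocks, each a proper arc. I would also check the convention that a step $\alpha\to\alpha g$ has generator $\alpha^{-1}\beta$, consistent with $(\pi\cdot g)_i=\pi_{g(i)}$, so that membership of $g$ in $G_k$ is exactly the statement that the last $n-k$ letters are preserved.
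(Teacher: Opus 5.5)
Your proposal is correct and follows the paper's proof essentially step for step. You get the partition from the least $k$ with $\pi\in G_k$, and then a step inside a $k$-block that crosses between two of its $(k-1)$-sub-blocks has its generator in $G_k\setminus G_{k-1}$; your explicit notes on the disjointness of layers (giving $n-1$ distinct generators) and on the cyclic reading at $k=n$ spell out points the paper leaves implicit.
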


\begin{proof}
Every $\pi\neq\id$ lies in exactly one $G_k\setminus G_{k-1}$, namely for the
least $k$ with $\pi\in G_k$; this gives the partition. Fix
$k\in\{2,\dots,n\}$. By genlexity a $k$-block consists of consecutive words and
is partitioned into $k\ge2$ consecutive $(k-1)$-blocks, so some step inside the
$k$-block joins two distinct $(k-1)$-blocks. That step joins two words of the
same coset $\pi G_k$, so its generator lies in $G_k$; and it changes the
$(k-1)$-block, so its generator lies outside $G_{k-1}$. Hence a generator of the
layer $G_k\setminus G_{k-1}$ is used, for every $k$.
\end{proof}

Since $G_{k-1}$ is the stabilizer of the point $k$ in $G_k$, a coset
$xG_{k-1}$ of $G_{k-1}$ in $G_k$ is determined by the image $x(k)$; the $k$
cosets thus correspond to the $k$ possible letters in position $k$.

\begin{lemma}[coset rotation]
\label{lem:rotcoset}
For $u\in G_k$, the cyclic group $\langle u\rangle$ acts simply transitively on
the $k$ cosets of $G_{k-1}$ in $G_k$ if and only if $u$ is a $k$-cycle on
$\{1,\dots,k\}$.
\end{lemma}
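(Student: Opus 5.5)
The plan is to transport the question from cosets to letters, using the identification recorded just before the statement. The coset $xG_{k-1}$ of $G_{k-1}$ in $G_k$ is determined by $x(k)\in\{1,\dots,k\}$. The group $G_k$ acts on these cosets by left multiplication, $u\cdot xG_{k-1}=uxG_{k-1}$, and this action is well defined. Under the identification $xG_{k-1}\mapsto x(k)$, the coset $uxG_{k-1}$ goes to $(ux)(k)=u(x(k))$. So the map is a $G_k$-equivariant bijection between the coset space and $\{1,\dots,k\}$ with its natural action. It is a bijection because $G_{k-1}$ is the stabilizer of $k$ and $G_k$ is transitive on $\{1,\dots,k\}$. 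I would first check that the composition convention in the paper ($(\pi\cdot g)_i=\pi_{g(i)}$, so products compose as functions) makes $(ux)(k)=u(x(k))$ hold. If the convention were the opposite, the same argument would go through with $u^{-1}$, which generates the same cyclic group.

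It therefore suffices to show: $\langle u\rangle$ acts simply transitively on $\{1,\dots,k\}$ if and only if $u$ is a $k$-cycle. The orbits of $\langle u\rangle$ on $\{1,\dots,k\}$ are exactly the supports of the cycles of $u$, fixed points included. So transitivity is equivalent to $u$ having a single cycle of length $k$.

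For simplicity in the forward direction, note that if $u$ is a $k$-cycle then $\langle u\rangle$ has order $k$. A transitive action of a group of order $k$ on $k$ points has stabilizers of order $1$, by orbit–stabilizer, so it is regular. Conversely, simple transitivity gives transitivity in particular, hence a single $k$-cycle by the previous paragraph.

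There is no real obstacle here. The only point needing care is the direction of the action (left multiplication on left cosets) and its compatibility with the paper's right action on positions. That is why I would verify equivariance explicitly before anything else.
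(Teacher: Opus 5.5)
Your proof is correct and follows essentially the paper's route: identify the coset $xG_{k-1}$ with the letter $x(k)$, so the question becomes whether the $\langle u\rangle$-orbit of $k$ has size $k$, which happens exactly when $u$ is a single $k$-cycle. Your explicit equivariance check and the orbit--stabilizer step for regularity spell out what the paper's one-line argument leaves implicit.
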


\begin{proof}
The cosets $u^{\,j}G_{k-1}$, $j=0,\dots,k-1$, are pairwise distinct if and only
if the images $u^{\,j}(k)$ are, that is, if and only if the
$\langle u\rangle$-orbit of $k$ has size $k$; for $u\in\Sym\{1,\dots,k\}$ this
holds exactly when $u$ is a single $k$-cycle.
\end{proof}

\begin{definition}
\label{def:tower}
A \emph{tower} is a sequence $(g_2,\dots,g_n)$ with
$g_k\in G_k\setminus G_{k-1}$ whose successive quotients
$u_k:=g_{k-1}^{-1}g_k$ (with $g_1:=\id$) rotate the cosets at each level, that
is, each $u_k$ is a $k$-cycle on $\{1,\dots,k\}$.
\end{definition}

Equivalently a tower is the data of a $k$-cycle $u_k$ for each $k$, from which
$g_k=g_{k-1}u_k=u_2u_3\cdots u_k$. Theorem~\ref{thm:class}(ii) will show that
the $k$-cycle condition costs no generality: it is forced on every genlex
Hamiltonian listing with $n-1$ generators.

\begin{lemma}[levels are independent]
\label{lem:independent}
For any choice of $k$-cycles $u_k$ on $\{1,\dots,k\}$, $k=2,\dots,n$, the
elements $g_k=u_2\cdots u_k$ satisfy $g_k\in G_k\setminus G_{k-1}$. Consequently
the number of towers is the superfactorial $1!\,2!\cdots(n-1)!$.
\end{lemma}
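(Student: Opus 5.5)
The plan is to treat the two assertions separately: first membership $g_k\in G_k\setminus G_{k-1}$, then the count, which follows from a bijection between towers and sequences of cycles $(u_2,\dots,u_n)$.

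\emph{Membership.} Each $u_j$ with $j\le k$ permutes $\{1,\dots,k\}$ and fixes every larger point, so $g_k=u_2\cdots u_k\in G_k$. To show $g_k\notin G_{k-1}$, it suffices to show that $g_k$ moves $k$, because $G_{k-1}$ is the stabilizer of $k$ in $G_k$. With the right-action convention, composition order matters, so I will check both readings of the product.

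Under the first reading, $u_k$ acts first: $g_k(k)=u_2\cdots u_{k-1}\bigl(u_k(k)\bigr)$. The element $g_{k-1}=u_2\cdots u_{k-1}$ lies in $G_{k-1}$ and so fixes $k$. Since $u_k$ is a $k$-cycle, $u_k(k)=m$ for some $m\ne k$ with $m\le k-1$. Then $g_{k-1}(m)\in\{1,\dots,k-1\}$, so $g_k(k)\ne k$.

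Under the other reading, $g_k(k)=u_k\bigl(g_{k-1}(k)\bigr)=u_k(k)\ne k$ directly.

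In both cases the key fact is the same: a permutation in $G_{k-1}$ fixes $k$ and preserves $\{1,\dots,k-1\}$, while $u_k$ moves $k$. Hence $g_k\in G_k\setminus G_{k-1}$, and the quotient condition $g_{k-1}^{-1}g_k=u_k$ holds by construction, so $(g_2,\dots,g_n)$ is a tower.

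\emph{Counting.} The map $(u_2,\dots,u_n)\mapsto(g_2,\dots,g_n)$ with $g_k=u_2\cdots u_k$ is injective, since $u_k=g_{k-1}^{-1}g_k$ is recovered from the $g$'s. It is surjective onto towers by Definition~\ref{def:tower}: every tower determines its quotients $u_k$, which are $k$-cycles, and the tower is then recovered as $g_k=g_{k-1}u_k$. By the membership argument, every sequence of cycles yields a valid tower, so there are no constraints between levels.

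The number of $k$-cycles on $k$ letters is $(k-1)!$. Multiplying over $k=2,\dots,n$ gives $1!\,2!\cdots(n-1)!$.

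No step is a real obstacle. The only care needed is the composition convention, and the argument works under either reading.
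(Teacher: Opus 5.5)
Your proposal is correct and matches the paper's proof: you show $g_k(k)=g_{k-1}(u_k(k))\in\{1,\dots,k-1\}$, and you count $(k-1)!$ choices of $k$-cycle per level with no constraint across levels. Checking the other composition convention and writing out the bijection via $u_k=g_{k-1}^{-1}g_k$ only spell out what the paper compresses into ``no constraint links the levels''.
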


\begin{proof}
Clearly $g_k\in G_k$. For $g_k\notin G_{k-1}$ it suffices that $g_k(k)\neq k$:
we have $g_k(k)=g_{k-1}(u_k(k))$, and $u_k(k)\in\{1,\dots,k-1\}$ because $u_k$
is a $k$-cycle, while $g_{k-1}$ preserves $\{1,\dots,k-1\}$. Hence
$g_k(k)\le k-1$. There are $(k-1)!$ $k$-cycles on $\{1,\dots,k\}$ and no
constraint links the levels, whence the count.
\end{proof}

Notably, no compatibility condition between consecutive levels arises.

\section{The classification}
\label{sec:class}

Given a tower, define words over the alphabet $\{g_2,\dots,g_n\}$ by Zaks'
recursion
\begin{equation}
\label{eq:rec}
  M_1=\varepsilon,\qquad
  M_k=\bigl(M_{k-1}\,g_k\bigr)^{k-1}M_{k-1}\quad(2\le k\le n).
\end{equation}
Thus $|M_k|=k!-1$. We read a word as a walk in $\Cay(S_n,\{g_2,\dots,g_n\})$
starting at a given vertex, each letter being a right multiplication, and write
$\operatorname{ev}(w)$ for the product of its letters.

\begin{lemma}[compensation]
\label{lem:comp}
$\operatorname{ev}(M_k)=g_k^{-1}$ for $1\le k\le n$. In particular
$\operatorname{ev}(M_n g_n)=\id$.
\end{lemma}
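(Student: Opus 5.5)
The proof is by induction on $k$, and the only input is the $k$-cycle condition built into Definition~\ref{def:tower}. Since $\operatorname{ev}$ is a monoid homomorphism from words to $S_n$, the recursion \eqref{eq:rec} turns directly into an identity in the group.

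For the base case $k=1$, we have $\operatorname{ev}(M_1)=\operatorname{ev}(\varepsilon)=\id=g_1^{-1}$, using the convention $g_1:=\id$ of Definition~\ref{def:tower}.

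For the inductive step, let $2\le k\le n$ and assume $\operatorname{ev}(M_{k-1})=g_{k-1}^{-1}$. Applying $\operatorname{ev}$ to \eqref{eq:rec} gives
\[
  \operatorname{ev}(M_k)=\bigl(\operatorname{ev}(M_{k-1})\,g_k\bigr)^{k-1}\operatorname{ev}(M_{k-1})
  =\bigl(g_{k-1}^{-1}g_k\bigr)^{k-1}g_{k-1}^{-1}=u_k^{\,k-1}\,g_{k-1}^{-1},
\]
where $u_k=g_{k-1}^{-1}g_k$ is the successive quotient. By the tower condition, $u_k$ is a $k$-cycle, so it has order exactly $k$. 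Hence $u_k^{\,k-1}=u_k^{-1}$, and therefore
\[
  \operatorname{ev}(M_k)=u_k^{-1}g_{k-1}^{-1}=(g_{k-1}u_k)^{-1}=g_k^{-1}.
\]
This closes the induction. Taking $k=n$ gives $\operatorname{ev}(M_ng_n)=g_n^{-1}g_n=\id$, which is the second assertion.

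No step is a real obstacle; the argument is a direct computation. The one point to state explicitly is where the $k$-cycle hypothesis is used. Only the order of $u_k$ matters here, and the order must be exactly $k$, because the recursion repeats the block $M_{k-1}g_k$ exactly $k-1$ times. If $u_k$ were merely some element of $G_k$, the word would evaluate to $u_k^{\,k-1}g_{k-1}^{-1}$, and this is generally not $g_k^{-1}$.
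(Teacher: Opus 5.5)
Your proof is correct and is the same induction as the paper's: evaluate the recursion to get $u_k^{\,k-1}g_{k-1}^{-1}$, then use $u_k^{\,k}=\id$ to rewrite this as $(g_{k-1}u_k)^{-1}=g_k^{-1}$. One small correction to your closing remark: the identity only needs $u_k^{\,k}=\id$ (order dividing $k$), not order exactly $k$; exactness is what Hamiltonicity in Theorem~\ref{thm:class} requires, not compensation.
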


\begin{proof}
Induction on $k$. For $k=1$, $\operatorname{ev}(\varepsilon)=\id=g_1^{-1}$.
Assume $\operatorname{ev}(M_{k-1})=g_{k-1}^{-1}$. Then
$\operatorname{ev}(M_{k-1}g_k)=g_{k-1}^{-1}g_k=u_k$, so by \eqref{eq:rec}
\[
  \operatorname{ev}(M_k)=u_k^{\,k-1}\,g_{k-1}^{-1}
  =u_k^{-1}g_{k-1}^{-1}
  =(g_{k-1}u_k)^{-1}=g_k^{-1},
\]
using $u_k^{\,k}=\id$.
\end{proof}

Thus traversing a block undoes the generator that separates consecutive
blocks; this is what closes the listing, for every tower and with no reference
to prefix reversals.

\begin{lemma}[palindromicity]
\label{lem:palin}
$M_k$ is a palindrome in the letters $g_2,\dots,g_k$.
\end{lemma}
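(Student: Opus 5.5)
Induction on $k$ directly from the recursion \eqref{eq:rec}, writing $\overline{w}$ for the reversal of a word $w$ (letters read in the opposite order, not inverted). The base case is immediate: $M_1=\varepsilon$ is trivially a palindrome, and $M_2=M_1g_2M_1=g_2$ is one as well.

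For the inductive step, assume $\overline{M_{k-1}}=M_{k-1}$. Expanding \eqref{eq:rec} gives
\[
  M_k=M_{k-1}\,g_k\,M_{k-1}\,g_k\cdots g_k\,M_{k-1},
\]
with $k$ copies of $M_{k-1}$ interleaved with $k-1$ copies of the single letter $g_k$. Reversal of a concatenation reverses the order of the factors and reverses each factor. Applied to this expansion, it produces the same alternating pattern, now with each factor $M_{k-1}$ replaced by $\overline{M_{k-1}}$ and each $g_k$ fixed (a one-letter word is its own reversal). By the induction hypothesis $\overline{M_{k-1}}=M_{k-1}$, so $\overline{M_k}=M_k$. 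Equivalently, $M_k=(M_{k-1}g_k)^{k-1}M_{k-1}=M_{k-1}(g_kM_{k-1})^{k-1}$, and reversing the first form yields the second.

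The alphabet claim, that only $g_2,\dots,g_k$ occur in $M_k$, follows from the same induction, since the step from $M_{k-1}$ to $M_k$ introduces only the letter $g_k$.

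There is no real obstacle here; the argument is purely syntactic and uses nothing about the group elements. The one point needing care is to keep formal reversal distinct from group inversion, since the letters $g_k$ are not assumed to be involutions.
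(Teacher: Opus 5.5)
Your proof is correct and follows the paper's argument: induction on $k$, with $M_k$ expanded as $k$ copies of $M_{k-1}$ separated by $k-1$ single letters $g_k$. Reversal swaps the blocks pairwise and reverses each one, and by the induction hypothesis this leaves $M_k$ unchanged. Your added remarks on the alphabet and on keeping formal reversal separate from group inversion are accurate.
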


\begin{proof}
Induction. $M_1=\varepsilon$. If $M_{k-1}$ is a palindrome then, writing
$M_k=M_{k-1}g_kM_{k-1}g_k\cdots g_kM_{k-1}$ ($k$ occurrences of $M_{k-1}$
separated by $k-1$ letters $g_k$), reversing the word exchanges the blocks
pairwise and reverses each of them, which leaves the word unchanged.
\end{proof}

\begin{theorem}[classification]
\label{thm:class}
Let $n\ge3$.
\begin{enumerate}
\item[(i)] For every tower $(g_2,\dots,g_n)$, the word $M_n g_n$ traverses a
genlex Hamiltonian cycle in $\Cay(S_n,\{g_2,\dots,g_n\})$.
\item[(ii)] Conversely, every genlex Hamiltonian \emph{listing} of $S_n$ using
$n-1$ generators arises this way from a unique tower: its generator in layer
$k$ is $g_k$, and the quotients $g_{k-1}^{-1}g_k$ are necessarily $k$-cycles.
\end{enumerate}
Zaks' order is the case $g_k=r_k$, the reversal of the first $k$ letters.
\end{theorem}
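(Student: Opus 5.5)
For (i) I would prove by induction on $k$ the following stronger statement: for every $x\in S_n$, the walk $M_k$ started at $x$ visits every element of the $k$-block $xG_k$ exactly once, each $j$-block it meets ($j\le k$) is traversed as a contiguous arc, and the walk ends at $x\,g_k^{-1}$. The case $k=1$ is trivial.

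For the inductive step I would use \eqref{eq:rec} to split $M_k$ into $k$ copies of $M_{k-1}$ separated by the letter $g_k$. By induction the first copy covers $xG_{k-1}$ and ends at $xg_{k-1}^{-1}$. The following $g_k$ then moves the walk to $xg_{k-1}^{-1}g_k=xu_k$. Iterating, the $j$-th copy ($0\le j\le k-1$) starts at $xu_k^{\,j}$ and covers the $(k-1)$-block $xu_k^{\,j}G_{k-1}$ as an arc. Since $u_k$ is a $k$-cycle, Lemma~\ref{lem:rotcoset} says these $k$ cosets are distinct and hence exhaust $xG_k$. So $xG_k$ is covered exactly once, as a concatenation of arcs, which gives genlexity at every level $\le k$. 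The endpoint is $xu_k^{\,k-1}g_{k-1}^{-1}=xg_k^{-1}$, which is the computation of Lemma~\ref{lem:comp}.

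At $k=n$ the walk $M_n$ is a Hamiltonian path from $x$ to $xg_n^{-1}$, and the final letter $g_n$ returns it to $x$. This closes a cycle of length $n!\ge 6$ (here $n\ge3$ is used). Every $j$-block is an arc of the linear listing, hence also of the cyclic one. The $g_k$ are distinct because they lie in distinct layers (Lemma~\ref{lem:independent}).

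For (ii), take a genlex Hamiltonian listing using $n-1$ generators. By Lemma~\ref{lem:budget} it uses exactly one generator $h_k$ from each layer $G_k\setminus G_{k-1}$. Consider a step inside a $k$-block. Its generator lies in $G_k$, so it is some $h_j$ with $j\le k$. It joins two distinct $(k-1)$-blocks if and only if $j=k$. I would then show by induction on $k$ that every $k$-block is traversed by one and the same word $W_k$, with $\operatorname{ev}(W_k)=h_k^{-1}$ and $W_1=\varepsilon$.

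Assume this holds for $k-1$. Inside a $k$-block the $k$ consecutive $(k-1)$-blocks are each traversed by $W_{k-1}$ and are separated by single steps $h_k$, so $W_k=(W_{k-1}h_k)^{k-1}W_{k-1}$. Put $v_k=\operatorname{ev}(W_{k-1})h_k=h_{k-1}^{-1}h_k$. If the block is entered at $y$, the $j$-th sub-block starts at $yv_k^{\,j}$. These $k$ sub-blocks must be pairwise distinct cosets of $G_{k-1}$, so the points $v_k^{\,j}(k)$, $0\le j<k$, are distinct. The $\langle v_k\rangle$-orbit of $k$ therefore has size $k$, and by the argument of Lemma~\ref{lem:rotcoset} $v_k$ is a $k$-cycle. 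Then $\operatorname{ev}(W_k)=v_k^{\,k-1}h_{k-1}^{-1}=v_k^{-1}h_{k-1}^{-1}=h_k^{-1}$, which closes the induction.

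Consequently $(h_2,\dots,h_n)$ is a tower with $u_k=v_k$, and the listing is $M_n$ for that tower, started at its first word. Uniqueness holds because the tower is recovered as the set of generators used, sorted by layer.

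The step I expect to need the most care is the start of (ii): checking that the steps between sub-blocks are forced to be $h_k$, and that no sub-block is revisited. Both follow from contiguity of the blocks (genlex) together with the layer partition.
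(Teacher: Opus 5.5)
Your proof is correct and follows the paper's own argument. In (i) it is the same induction on $k$ via Lemma~\ref{lem:comp} and Lemma~\ref{lem:rotcoset}; in (ii) it likewise forces every internal block boundary to carry $h_k$ and reads off that the quotients are $k$-cycles. Your single induction in (ii), which carries $\operatorname{ev}(W_k)=h_k^{-1}$ along with the ruler-word structure, makes explicit what the paper's ``run the argument of part (i) in reverse'' leaves tacit: the lower quotients must already be known to be cycles before Lemma~\ref{lem:comp} can place the sub-block starts at $\pi u_k^{\,j}$.
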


\begin{proof}[Proof of (i)]
We show by induction on $k$ that, started at any vertex $\pi$, the word $M_k$
visits the $k!$ elements of the block $\pi G_k$, each exactly once, and ends at
$\pi g_k^{-1}$.

For $k=1$ this is vacuous. Assume it for $k-1$. By \eqref{eq:rec} the walk
consists of $k$ traversals of $M_{k-1}$ separated by $k-1$ steps $g_k$. By the
induction hypothesis the $j$-th traversal covers a full $(k-1)$-block, and by
Lemma~\ref{lem:comp} the vertex at which the $j$-th traversal begins is
$\pi\,(g_{k-1}^{-1}g_k)^{\,j}=\pi u_k^{\,j}$ for $j=0,\dots,k-1$. By
Lemma~\ref{lem:rotcoset} the group $\langle u_k\rangle$ acts simply
transitively on the $k$ cosets of $G_{k-1}$ in $G_k$; hence the $k$ blocks
$\pi u_k^{\,j}G_{k-1}$ are pairwise distinct and exhaust $\pi G_k$. Each
vertex of $\pi G_k$ is therefore visited exactly once, and the walk ends at
$\pi u_k^{\,k-1}g_{k-1}^{-1}=\pi g_k^{-1}$, as claimed.

Taking $k=n$ and $\pi=\id$, the walk $M_n$ visits all of $S_n$ and ends at
$g_n^{-1}$; the final letter $g_n$ returns to $\id$, so $M_ng_n$ is a
Hamiltonian cycle. Genlexity is the statement just proved: for each $k$ every
$k$-block is covered by one contiguous traversal of $M_k$.
\end{proof}

\begin{proof}[Proof of (ii)]
Let $C$ be a genlex Hamiltonian cycle using generators $h_2,\dots,h_n$, one per
layer, $h_k\in G_k\setminus G_{k-1}$ (Lemma~\ref{lem:budget}).

Fix $k$ and consider a $k$-block $B=\pi G_k$. By genlexity $B$ is an arc, and
$B$ is partitioned into $k$ sub-arcs, the $(k-1)$-blocks it contains. A step
inside a $k$-block uses a generator of $G_k$, that is, one of
$h_2,\dots,h_k$; a step crossing from one $(k-1)$-block to the next uses a
generator outside $G_{k-1}$, that is, one of $h_k,\dots,h_n$. A step doing both
therefore uses $h_k$. Hence all $k-1$ internal boundaries of $B$ are traversed
by the letter $h_k$, and the letter sequence of $C$ is forced to be the ruler
word \eqref{eq:rec} over the alphabet $h_2,\dots,h_n$; that is, $C$ is traversed
by $M_ng_n$ with $g_k=h_k$.

The argument uses only the internal block boundaries, so it applies verbatim to
a Hamiltonian path: a listing of $n!$ words has $n!-1$ steps, which is the
length of $M_n$.

It remains to see that each $u_k=h_{k-1}^{-1}h_k$ is a $k$-cycle. Run the
argument of part (i) in reverse: the $k$ sub-blocks of a $k$-block begin at the
vertices $\pi u_k^{\,j}$, $j=0,\dots,k-1$, and Hamiltonicity forces these to lie
in $k$ distinct cosets of $G_{k-1}$. Thus $\langle u_k\rangle$ acts simply
transitively on the $k$ cosets, and $u_k$ is a $k$-cycle by
Lemma~\ref{lem:rotcoset}.
\end{proof}

\begin{corollary}[closure is automatic]
\label{cor:closure}
A genlex Hamiltonian listing of $S_n$ using $n-1$ generators, started at $\id$,
ends at $g_n^{-1}$; appending the single letter $g_n$ closes it into a
Hamiltonian cycle.
\end{corollary}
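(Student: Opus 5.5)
The plan is to read the corollary off the two parts of Theorem~\ref{thm:class} together with the compensation lemma. I will not construct anything new.

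\emph{Step 1 (identify the word).} Let $L$ be a genlex Hamiltonian listing of $S_n$, started at $\id$ and using $n-1$ generators. By Lemma~\ref{lem:budget} there is exactly one generator $h_k$ in each layer $G_k\setminus G_{k-1}$. Part~(ii) of Theorem~\ref{thm:class} applies to listings as well as cycles: its block-boundary argument uses only the internal boundaries of each $k$-block, which are present in any Hamiltonian path of $n!-1$ steps. So the letter sequence of $L$ is forced to be $M_n$ over the alphabet $g_k:=h_k$. The same part shows that each quotient $g_{k-1}^{-1}g_k$ is a $k$-cycle, so $(g_2,\dots,g_n)$ is a tower.

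\emph{Step 2 (locate the endpoint).} Since $L$ starts at $\id$ and is traversed by $M_n$, it ends at $\operatorname{ev}(M_n)$. By Lemma~\ref{lem:comp} this equals $g_n^{-1}$.

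\emph{Step 3 (close up).} Because $g_n$ is one of the generators, appending the letter $g_n$ is a legal step from $g_n^{-1}$ to $g_n^{-1}g_n=\id$. The listing visits every vertex exactly once, so the closed walk $M_ng_n$ is a Hamiltonian cycle. This is exactly the cycle of part~(i). As a consistency check, the wrap-around step crosses between two $(n-1)$-blocks with the generator of layer $n$, which matches part~(i)'s genlexity claim for the cyclic listing.

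The only delicate point is Step~1. I have to confirm that the forcing argument in part~(ii) really never uses the closing edge, and it does not: every internal boundary of every $k$-block lies inside the path. I also have to confirm that the $k$-cycle deduction there uses only the starting vertices $\pi u_k^{\,j}$ of the sub-blocks. These are computed from Lemma~\ref{lem:comp} applied to prefixes, which again lie inside the path. Once that is checked, the corollary is immediate.
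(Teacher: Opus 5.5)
Your proposal is correct and follows the paper's proof. The paper likewise invokes Theorem~\ref{thm:class}(ii), noting there that its block-boundary argument applies verbatim to Hamiltonian paths, to see that the listing is traversed by $M_n$ over a tower, and then uses Lemma~\ref{lem:comp} for the endpoint $g_n^{-1}$; your Step~3 only makes the closing step explicit, and on your ``delicate point'' the $k$-cycle deduction must be run by induction on $k$, since Lemma~\ref{lem:comp} at level $k-1$ already presupposes that $u_2,\dots,u_{k-1}$ are cycles.
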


\begin{proof}
By Theorem~\ref{thm:class}(ii) the listing is traversed by $M_n$, and
$\operatorname{ev}(M_n)=g_n^{-1}$ by Lemma~\ref{lem:comp}.
\end{proof}

\begin{corollary}[distinct towers, distinct cycles]
\label{cor:distinct}
Distinct towers give distinct Hamiltonian cycles.
\end{corollary}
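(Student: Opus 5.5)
We must show that two distinct towers $(g_2,\dots,g_n)\neq(g'_2,\dots,g'_n)$ yield distinct Hamiltonian cycles. The plan is to recover the tower from the cycle, read as a set of edges or as a cyclic listing. A tower is the same as its sequence of generators, since $u_k=g_{k-1}^{-1}g_k$. So it suffices to show that the cycle determines each $g_k$.

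Take the cycle $C$ traversed by $M_ng_n$ from $\id$. By Theorem~\ref{thm:class}(i) every $k$-block $\pi G_k$ is a contiguous arc of $C$, and the $(k-1)$-blocks inside it are sub-arcs. As in the proof of Theorem~\ref{thm:class}(ii), an edge of $C$ joining two consecutive $(k-1)$-blocks within one $k$-block uses a generator in $G_k\setminus G_{k-1}$. Since the generators $g_2,\dots,g_n$ lie in distinct layers, that generator is $g_k$.

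The key point is that $g_k$ can be read off such an edge from its endpoints alone. If the edge joins $x$ and $y$, then $y=xg_k$ or $x=yg_k$. So the unordered label of the edge is $\{x^{-1}y,\,y^{-1}x\}$, and it is the unique element of this pair lying in $G_k\setminus G_{k-1}$. Both elements in fact lie there, because the layers are closed under inversion. This leaves only the ambiguity $g_k$ versus $g_k^{-1}$, and the main obstacle is to fix the orientation.

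For the orientation, use the direction of traversal. Read the cyclic listing starting from $\id$ in the direction in which the walk $M_ng_n$ proceeds. The consecutive blocks are then ordered, and the edge leaving the first $(k-1)$-block of the $k$-block $G_k$ is the step $x\mapsto xg_k$, whose label $x^{-1}y=g_k$ is determined.

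Now suppose the two towers give the same cycle $C$. If they traverse it in the same direction, the labels agree, so $g_k=g'_k$ for all $k$. If they traverse it in opposite directions, the reversed listing is the walk $M_n^{\mathrm{rev}}$, which equals $M_n$ by Lemma~\ref{lem:palin}, with labels inverted. So in that case $g'_k=g_k^{-1}$ for every $k$.

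It remains to rule this out unless the towers coincide. By Lemma~\ref{lem:comp} the walk $M_n$ from $\id$ ends at $g_n^{-1}$, and its first letter is $g_2$. Under reversal the first vertex after $\id$ is $g_n$, while the tower $g'$ has first step $g'_2$. So reversal gives $g'_2=g_n$, which is impossible for $n\ge3$ since $g_n\notin G_2$. Therefore the towers have the same direction and coincide, and the cycle determines the tower.
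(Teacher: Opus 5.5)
Your proof is correct and follows essentially the paper's route: the two neighbours of $\id$ on $C$, namely $g_2\in G_2$ and $g_n^{-1}\in G_n\setminus G_{n-1}$, fix the direction of traversal, after which the letter sequence and hence the tower are determined (the paper reads $u_k$ off the vertex at position $(k-1)!$, while you read $g_k$ off a block-boundary edge). One slip: by Lemma~\ref{lem:comp} the reversed traversal first visits $g_n^{-1}$, not $g_n$, but this is harmless since $g_n^{-1}$ also lies outside $G_2$; and your intermediate claim $g'_k=g_k^{-1}$ is both unneeded and not properly justified, because the reversed walk from $\id$ begins with the letter $g_n^{-1}$ and is only a cyclic rotation of the inverted word.
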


\begin{proof}
From the edge set of $C$, the two neighbours of $\id$ are $g_2$ and $g_n^{-1}$,
which lie in different layers for $n\ge3$; so the two directions of traversal
are distinguishable and the letter sequence is determined by $C$. The vertex at
position $(k-1)!$ is then $u_k$, which recovers the tower.
\end{proof}

\section{Ranking}
\label{sec:structure}

\begin{proposition}[ranking]
\label{prop:rank}
Let $(g_2,\dots,g_n)$ be a tower and $C$ its cycle, started at $\id$. For
$0\le i<n!$ write $i=\sum_{k=2}^{n}c_k\,(k-1)!$ with $0\le c_k<k$, the
factorial-base expansion. Then the vertex at position $i$ is
\[
  \pi(i)=u_n^{c_n}u_{n-1}^{c_{n-1}}\cdots u_2^{c_2}.
\]
The generator applied between positions $i$ and $i+1$ is $g_k$, where $k$ is the
least level with $c_k<k-1$; equivalently, the largest $k$ with $(k-1)!\mid i+1$.
\end{proposition}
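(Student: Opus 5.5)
The plan is to strengthen the statement to a relative form at every level $k$ and induct on $k$, reusing the block decomposition from the proof of Theorem~\ref{thm:class}(i). The claim to prove is this: for any start vertex $\pi$ and any $0\le i<k!$ with $i=\sum_{j=2}^{k}c_j\,(j-1)!$, the walk $M_k$ started at $\pi$ is at the vertex $\pi\,u_k^{c_k}\cdots u_2^{c_2}$ after $i$ steps. The case $k=n$, $\pi=\id$ is the first assertion. For $k=1$ the only position is $i=0$, the vertex is $\pi$, and the claim holds.

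For the inductive step, write $M_k=M_{k-1}g_kM_{k-1}g_k\cdots g_kM_{k-1}$ as in \eqref{eq:rec}. Each copy of $M_{k-1}$ has $(k-1)!-1$ letters, so the $j$-th copy ($j=0,\dots,k-1$) occupies positions $j(k-1)!$ through $(j+1)(k-1)!-1$. By Lemma~\ref{lem:comp}, as already observed in the proof of Theorem~\ref{thm:class}(i), that copy begins at the vertex $\pi u_k^{\,j}$.

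Now take $i=c_k(k-1)!+i'$ with $0\le i'<(k-1)!$. Then $i'=\sum_{j<k}c_j(j-1)!$ is exactly the factorial-base expansion truncated below level $k$. The vertex at position $i$ is therefore the vertex reached after $i'$ steps of $M_{k-1}$ started from $\pi u_k^{c_k}$. By the induction hypothesis this is $\pi u_k^{c_k}u_{k-1}^{c_{k-1}}\cdots u_2^{c_2}$, which is the claim at level $k$.

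For the generator between positions $i$ and $i+1$, I would use the same decomposition. If $i'<(k-1)!-1$, the step lies inside the $c_k$-th copy of $M_{k-1}$, and we recurse to level $k-1$. If $i'=(k-1)!-1$, the step is one of the separating letters $g_k$, provided $c_k<k-1$, i.e.\ we are not in the last copy. Unwinding the recursion from the top level downward, the step is $g_k$ for the least $k$ such that
\begin{itemize}
\item the lower digits are all maximal, $c_j=j-1$ for every $j<k$, and
\item $c_k<k-1$.
\end{itemize}
Here one checks that "all digits below $k$ maximal" is the same as $i'=(k-1)!-1$, because $\sum_{j<k}(j-1)(j-1)!=(k-1)!-1$ by telescoping. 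The least such $k$ is precisely the least $k$ with $c_k<k-1$.

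For the divisibility form, the identity above gives two equivalences:
\begin{itemize}
\item $c_j=j-1$ for all $j<k$ if and only if $(k-1)!\mid i+1$;
\item additionally $c_k=k-1$ if and only if $k!\mid i+1$.
\end{itemize}
So the least $k$ with $c_k<k-1$ is the largest $k$ with $(k-1)!\mid i+1$. This holds for $i<n!-1$. For $i=n!-1$ all digits are maximal and no such $k$ exists; that step is the closing letter $g_n$ of $M_ng_n$, and I would mention this convention explicitly, since $n!$ is divisible by $(n-1)!$ and the divisibility form then gives $g_n$ as well.

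The main obstacle is not conceptual but one of bookkeeping. The positions and the digit indexing (starting at level $2$, weight $(k-1)!$) have to match the block offsets $j(k-1)!$ exactly, and the edge case $i=n!-1$ must be handled.
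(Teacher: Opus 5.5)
Your proof is correct and follows the same route as the paper. The vertex formula comes from the block decomposition of \eqref{eq:rec}, with the $j$-th copy of $M_{k-1}$ starting at $\pi u_k^{\,j}$ by Lemma~\ref{lem:comp}, and the generator rule is the carry rule of the mixed-radix counter, which you make explicit via $\sum_{j<k}(j-1)(j-1)!=(k-1)!-1$. You also treat the closing step $i=n!-1$ explicitly (there the letter is the appended $g_n$, with $k\le n$ in the divisibility form), an edge case the paper's statement leaves implicit.
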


\begin{proof}
Immediate from the proof of Theorem~\ref{thm:class}(i): the walk enters the
$c_n$-th $(n-1)$-block at $u_n^{c_n}$, and recursively within it the $c_{n-1}$-th
$(n-2)$-block at $u_n^{c_n}u_{n-1}^{c_{n-1}}$, and so on. The statement about
the generator is the carry rule of the mixed-radix counter, which by
\eqref{eq:rec} is the position of the letter of level $k$ in the ruler word.
\end{proof}

Two features of Proposition~\ref{prop:rank} merit comment. First, there is
\emph{no reflected correction}:
in the reflected Gray code and in Trotter--Johnson the digit must be replaced by
its complement according to the parity of the higher digits, because the
recursion is boustrophedon; here \eqref{eq:rec} is repetitive, each block being
traversed in the same direction, so the digits are mutually independent and
ranking and unranking are $O(n)$ with no parity bookkeeping. Second, the normal
form of Proposition~\ref{prop:rank} has the shape of a \emph{polycyclic} normal
form --- bounded powers of $n-1$ fixed elements along a chain --- although $S_n$
is not polycyclic and the chain $G_k$ is not subnormal. What is lost without
normality is collection: the coordinates of a product are not a function of the
coordinates of the factors.

\section{Symmetry}
\label{sec:symmetry}

Let $C$ be the cycle of a tower, viewed as a set of edges. Edges are right
multiplications, so left translations $L_g:\sigma\mapsto g\sigma$ are graph
automorphisms; $\Stab(C)$ denotes the setwise stabilizer of $C$ inside the
left-regular copy $L(S_n)$. We claim nothing about the full automorphism group
of the graph; for Zaks' cycle in the pancake graph, the passage from the
left-regular stabilizer to the full automorphism group is carried out in
\cite{sharvitbenjo}. Since the left-regular action is free, $\Stab(C)$ embeds without
fixed points in the dihedral group of a cycle of length $n!$, hence is cyclic or
dihedral, and only its order is at issue.

The symmetry at issue is the one Gregor, Merino and M\"utze call
\emph{compression} \cite{gmm}: a Hamiltonian cycle on $N$ vertices is
$k$-symmetric when rotating it by $N/k$ positions is a graph automorphism.

\begin{theorem}[cyclic-order rigidity]
\label{thm:rigidity}
Let $n\ge3$, let $C$ be any cyclic ordering of $S_n$, and let $c\in S_n$ be an
$n$-cycle with $L_c(C)=C$. Then
\[
  \Stab_{L(S_n)}(C)\subseteq\{L_g:\ gcg^{-1}\in\{c,c^{-1}\}\},
\]
and in particular $|\Stab_{L(S_n)}(C)|\le 2n$.
\end{theorem}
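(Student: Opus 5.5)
The plan is to read everything inside the automorphism group of the cycle. Write $N=n!$ and let $D_N$ be the dihedral group of symmetries of the abstract cycle $C$ on $N$ vertices. Every $L_g\in\Stab_{L(S_n)}(C)$ permutes the vertices of $C$ and preserves its cyclic adjacency, so it induces an element of $D_N$. The left-regular action is free, so this map is injective. From then on I would work only with two facts about $D_N$: its rotations form a cyclic normal subgroup $R_N$ of index $2$, and every element outside $R_N$ (a reflection) inverts every rotation under conjugation.

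\emph{Step 1: $L_c$ is a rotation.} The element $c$ has order $n\ge3$. A reflection of $D_N$ has order $2$, so $L_c$ cannot induce one. Hence $L_c$ induces a rotation, necessarily of order exactly $n$.

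\emph{Step 2: the rotations in the stabilizer are exactly $\langle L_c\rangle$.} Let $L_g\in\Stab(C)$ induce a rotation. Rotations commute in $D_N$, and the map to $D_N$ is injective, so $L_gL_c=L_cL_g$, that is, $gc=cg$. The centralizer in $S_n$ of an $n$-cycle is the cyclic group it generates, since it has order $n!/|\text{class}|=n!/(n-1)!=n$. Therefore $g\in\langle c\rangle$. This gives $\Stab(C)\cap R_N=\langle L_c\rangle$, a subgroup of order $n$.

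\emph{Step 3: the reflections in the stabilizer.} Let $L_g\in\Stab(C)$ induce a reflection. Then $L_gL_cL_g^{-1}=L_c^{-1}$ in $D_N$. Since $L_gL_cL_g^{-1}=L_{gcg^{-1}}$ and the map is injective, this says $gcg^{-1}=c^{-1}$. Combining this with Step 2, where $gcg^{-1}=c$, every element of the stabilizer satisfies $gcg^{-1}\in\{c,c^{-1}\}$, which is the asserted inclusion.

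\emph{Step 4: the bound.} The rotation part $\Stab(C)\cap R_N$ has index at most $2$ in $\Stab(C)$, so $|\Stab(C)|\le 2|\langle L_c\rangle|=2n$. Alternatively, the set $\{g: gcg^{-1}\in\{c,c^{-1}\}\}$ is the normalizer of $\langle c\rangle$ intersected with the preimage of $\{c,c^{-1}\}$, of order at most $2n$ by the centralizer computation in Step 2.

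The only step with real content is Step 2, because it needs the map $\Stab(C)\to D_N$ to be injective and the rotation/reflection dichotomy to transfer back to $S_n$. Once the stabilizer is viewed as a subgroup of $D_N$ via the free action, both points are immediate. I expect nothing harder than the standard centralizer count for an $n$-cycle.
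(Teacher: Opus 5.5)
Your proof is correct and takes the same route as the paper. Both arguments embed the stabilizer in the dihedral group of the cycle via the free left action and note that $L_c$ is a rotation of order $n$, which every element of the stabilizer fixes or inverts under conjugation; faithfulness then gives $gcg^{-1}\in\{c,c^{-1}\}$, and the bound comes from the centralizer of an $n$-cycle being $\langle c\rangle$. The differences are minor: you rule out $L_c$ being a reflection explicitly by its order $n\ge3$, and you get $2n$ from the index-$2$ rotation subgroup (identified as exactly $\langle L_c\rangle$) rather than by counting the single coset of solutions to $gcg^{-1}=c^{-1}$.
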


\begin{proof}
The automorphism group of a cycle is dihedral, and $L_c$ acts on $C$ as a
rotation of order $n$ (it is fixed-point-free of order $n$). Any $L_g$
stabilizing $C$ conjugates that rotation to itself or to its inverse, so
$L_gL_cL_g^{-1}\in\{L_c,L_c^{-1}\}$; the left-regular action being faithful,
$gcg^{-1}\in\{c,c^{-1}\}$. The centralizer of an $n$-cycle in $S_n$ is
$\langle c\rangle$, of order $n$; the solutions of $gcg^{-1}=c^{-1}$ form a
single coset of it. Hence at most $2n$ elements.
\end{proof}

\begin{definition}
A Hamiltonian cycle $C$ in a Cayley graph on $S_n$ is \emph{cyclic} when
$\Stab(C)\cong C_n$ and \emph{dihedral} when $\Stab(C)\cong D_n$.
\end{definition}

\begin{lemma}[the rotation]
\label{lem:rotation}
For every tower, $L_{u_n}(C)=C$, and $L_{u_n}$ acts on $C$ as a rotation by
$(n-1)!$ positions, of order $n$. In particular every tower cycle is
$n$-symmetric and $\Stab(C)\supseteq\langle L_{u_n}\rangle\cong C_n$.
\end{lemma}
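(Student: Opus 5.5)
The plan is to read everything off the ranking formula of Proposition~\ref{prop:rank}. Write $N=n!$ and let $\pi(i)$ denote the vertex at position $i$ of $C$, started at $\id$. I will show the stronger pointwise identity
\[
  u_n\,\pi(i)=\pi\bigl(i+(n-1)!\bmod N\bigr)\qquad(0\le i<N).
\]
Left multiplication by $u_n$ then sends each vertex of $C$ to the vertex $(n-1)!$ steps further along $C$. Since consecutive vertices map to consecutive vertices, $L_{u_n}$ maps each edge of $C$ to an edge of $C$, so $L_{u_n}(C)=C$. The action is a rotation by $(n-1)!$ positions, whose order is $N/(n-1)!=n$; this also follows from $u_n$ being an $n$-cycle. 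This rotation is precisely what the Gregor--Merino--M\"utze definition of $n$-symmetry asks for, and $\langle L_{u_n}\rangle\cong C_n$ sits inside $\Stab(C)$.

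For the identity, write $i=\sum_k c_k(k-1)!$ in factorial base. Adding $(n-1)!$ changes only the top digit $c_n$, to $c_n+1$, unless $c_n=n-1$, in which case the top digit wraps to $0$ modulo $N$. In the first case Proposition~\ref{prop:rank} gives
\[
  \pi(i+(n-1)!)=u_n^{c_n+1}u_{n-1}^{c_{n-1}}\cdots u_2^{c_2}=u_n\pi(i)
\]
directly. In the wrap case the new vertex is $u_n^{0}u_{n-1}^{c_{n-1}}\cdots u_2^{c_2}$. The old vertex multiplied by $u_n$ is $u_n^{n}u_{n-1}^{c_{n-1}}\cdots$, and these agree because $u_n^n=\id$, since $u_n$ is an $n$-cycle.

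An alternative check, avoiding ranking, uses the proof of Theorem~\ref{thm:class}(i). The walk consists of $n$ traversals of $M_{n-1}$, the $j$-th starting at $u_n^{\,j}$, with all traversals of the same word. Left multiplication by $u_n$ commutes with the right multiplications along the word, so it carries the $j$-th traversal onto the $(j+1)$-st. The closing edge, from $g_n^{-1}$ back to $\id$ via $g_n$, is carried correctly because the whole cycle word $M_ng_n$ is the $n$-fold power $(M_{n-1}g_n)^n$.

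I expect no real obstacle here. The only point needing care is the wrap-around at $c_n=n-1$, together with the observation that the closing edge of the cycle is also preserved. Both are handled by $u_n^n=\id$, which is exactly the content of Lemma~\ref{lem:comp} at top level, or equivalently by the fact that $C$ is read as the periodic word $(M_{n-1}g_n)^n$.
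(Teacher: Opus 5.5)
Your proof is correct and is essentially the paper's: the paper argues exactly as in your ``alternative check'', writing $M_ng_n=(M_{n-1}g_n)^n$ with $\operatorname{ev}(M_{n-1}g_n)=u_n$, so that the vertex at position $j(n-1)!+m$ is $u_n^{\,j}p_m$. Your main route via Proposition~\ref{prop:rank} uses only the top factorial digit, which encodes this same top-level periodicity, so it is a repackaging of that argument rather than a different one.
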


\begin{proof}
By \eqref{eq:rec}, $M_ng_n=(M_{n-1}g_n)^n$: the traversal is $n$ repetitions of
one word $B=M_{n-1}g_n$ of length $(n-1)!$, and $\operatorname{ev}(B)=u_n$ by
Lemma~\ref{lem:comp}. Hence the vertex at position $j(n-1)!+m$ equals
$u_n^{\,j}\cdot p_m$, where $p_m$ is the vertex at position $m$ of the first
copy. Advancing by $(n-1)!$ positions is therefore left multiplication by $u_n$.
As $u_n$ is an $n$-cycle, $L_{u_n}$ has order $n$.
\end{proof}

\begin{lemma}[the reflection]
\label{lem:reflection}
If every $g_k$ is an involution, then $L_{g_n}(C)=C$ and $L_{g_n}$ acts on $C$
as a reflection.
\end{lemma}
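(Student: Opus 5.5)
The plan is to show directly that left multiplication by $g_n$ reverses the listing of $C$ about a fixed axis. Concretely, I will show $g_n p_i = p_{N-1-i}$ for all $i$, where $N=n!$ and $p_i$ is the vertex at position $i$. The two ingredients are the palindromicity of $M_n$ (Lemma~\ref{lem:palin}) and the compensation identity $\operatorname{ev}(M_n)=g_n^{-1}$ (Lemma~\ref{lem:comp}). The hypothesis that every $g_k$ is an involution is what allows a reversed word to be read as an inverse.

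Start $C$ at $\id$ and write $M_n=a_1a_2\cdots a_{N-1}$, with each $a_j\in\{g_2,\dots,g_n\}$. Then $p_i=a_1\cdots a_i$ for $0\le i\le N-1$, and the closing letter $g_n$ joins $p_{N-1}$ back to $p_0$. By Lemma~\ref{lem:palin}, $a_j=a_{N-j}$ for all $1\le j\le N-1$. By Lemma~\ref{lem:comp}, $p_{N-1}=g_n^{-1}=g_n$, using that $g_n$ is an involution. For $0\le i\le N-1$ we then compute
\[
  p_{N-1-i}=p_{N-1}\,\bigl(a_{N-i}\cdots a_{N-1}\bigr)^{-1}
  =g_n\,a_{N-1}^{-1}\cdots a_{N-i}^{-1}
  =g_n\,a_{N-1}\cdots a_{N-i}.
\]
The last equality holds because every letter is an involution. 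By palindromicity, $a_{N-1}\cdots a_{N-i}=a_1\cdots a_i=p_i$. Hence $L_{g_n}(p_i)=p_{N-1-i}$.

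It remains to check edges and the nature of the map. Each internal edge $\{p_i,p_{i+1}\}$ with $0\le i\le N-2$ is sent to $\{p_{N-1-i},p_{N-2-i}\}$, which is again an internal edge of $C$. The closing edge $\{p_{N-1},p_0\}$ is sent to $\{p_0,p_{N-1}\}$, which is itself. Hence $L_{g_n}(C)=C$. The induced permutation of positions, $i\mapsto N-1-i \pmod N$, reverses the cyclic orientation, so it is a reflection of the $N$-cycle and not a rotation. Since $N$ is even it fixes no vertex. Its axis passes through the midpoints of the closing edge and of the middle edge $\{p_{N/2-1},p_{N/2}\}$.

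I expect no serious obstacle. The only delicate point is index bookkeeping: the palindrome is $M_n$ and not $M_ng_n$, and the closing edge must be handled separately. The place where the involution hypothesis enters must also be made explicit. It is used twice: once to identify $g_n^{-1}$ with $g_n$, and once to turn the inverse of a reversed subword into the subword itself.
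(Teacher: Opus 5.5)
Your proof is correct and follows essentially the paper's route. Palindromicity of $M_n$ together with $\operatorname{ev}(M_n)=g_n^{-1}$ gives $p_{N-1-i}=g_n^{-1}p_i$ once every letter is an involution, so $L_{g_n}$ acts on positions as $i\mapsto N-1-i$; your explicit check of the closing edge and your remark that this reflection fixes no vertex are small details that the paper leaves implicit.
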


\begin{proof}
Write $w=w_1w_2\cdots w_N$ for $M_ng_n$, $N=n!$, so that $w_1\cdots w_{N-1}=M_n$
is a palindrome (Lemma~\ref{lem:palin}): $w_i=w_{N-i}$ for $1\le i\le N-1$. Let
$p_m=w_1\cdots w_m$ be the vertex at position $m$. By Lemma~\ref{lem:comp},
$p_{N-1}=g_n^{-1}$, so
\[
  p_{N-1-m}=p_{N-1}\bigl(w_{N-m}\cdots w_{N-1}\bigr)^{-1}
           =g_n^{-1}\bigl(w_m w_{m-1}\cdots w_1\bigr)^{-1}
           =g_n^{-1}\,w_1^{-1}\cdots w_m^{-1}.
\]
If all letters are involutions this is $g_n^{-1}p_m=g_n\,p_m$. Thus $L_{g_n}$
maps the vertex at position $m$ to the vertex at position $N-1-m$, an
orientation-reversing automorphism of $C$.
\end{proof}

\begin{theorem}[symmetry]
\label{thm:sym}
The cycle of a tower is dihedral if every $g_k$ is an involution, and cyclic
otherwise. In the first case $\Stab(C)=\langle L_{u_n},L_{g_n}\rangle\cong D_n$;
in the second $\Stab(C)=\langle L_{u_n}\rangle\cong C_n$. The all-involutive
towers are counted by
\[
  \prod_{k=2}^{n}2^{\lfloor (k-1)/2\rfloor}\bigl\lfloor (k-1)/2\bigr\rfloor!
  \;=\;2^{\lfloor (n-1)^2/4\rfloor}\prod_{k=2}^{n}\bigl\lfloor (k-1)/2\bigr\rfloor! .
\]
\end{theorem}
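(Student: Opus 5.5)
Three things must be proved: the stabilizer is at least $\langle L_{u_n}\rangle$, and $\langle L_{u_n},L_{g_n}\rangle$ when all $g_k$ are involutions; it is no larger; and the count of all-involutive towers.

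\textbf{Lower bounds.} Lemma~\ref{lem:rotation} gives $\langle L_{u_n}\rangle\cong C_n$ inside $\Stab(C)$. Lemma~\ref{lem:reflection} gives the reflection $L_{g_n}$ in the all-involutive case. A rotation of order $n$ together with a reflection of a cycle generates a dihedral group of order $2n$. So in that case $\Stab(C)\supseteq\langle L_{u_n},L_{g_n}\rangle\cong D_n$.

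\textbf{Upper bound.} Apply Theorem~\ref{thm:rigidity} with $c=u_n$, so that $|\Stab(C)|\le 2n$; this settles the dihedral case. In the remaining case $\Stab(C)$ is either $\langle L_{u_n}\rangle$ or a dihedral group of order $2n$ containing it. It then suffices to show that no $L_h$ reverses the orientation of $C$ unless every $g_k$ is an involution.

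Suppose $L_h$ is a reflection. It maps each edge of $C$ to an edge of $C$, and a left translation preserves edge labels up to inversion. Indeed, the edge $\sigma\to\sigma g$ maps to $h\sigma\to h\sigma g$, so traversed in the reversed orientation it carries label $g^{-1}$. Hence the cyclic letter word $w=M_ng_n$ equals, up to a cyclic shift, the word obtained by reversing $w$ and inverting every letter. The multiset of letters must be preserved: letter $g_k$ occurs $n!/k!\cdot(k-1)$ times, with distinct counts for distinct $k$ (and $g_n$ occurring $n-1$ times). Each $g_k^{-1}$ therefore has to be one of the alphabet letters with the same count, namely $g_k$ itself; the subtle case is $g_k^{-1}=g_j$ with $j\ne k$. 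That is impossible here: $g_k^{-1}\in G_k\setminus G_{k-1}$ lies in layer $k$, and the layers are disjoint. So $g_k^{-1}=g_k$ as group elements, and every $g_k$ is an involution.

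This is the step I expect needs the most care: one must check that a reflection really forces letters to invert, rather than merely reversing the word. I would make this explicit through the edge-label computation above, using that the generators lie in distinct layers, so labels are well defined even if some $g_k^{-1}$ is a generator.

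\textbf{Count.} By Lemma~\ref{lem:independent} the towers are free choices of $u_k$. The condition $g_k^2=1$ for every $k$ is an inductive constraint: given $g_{k-1}$, we need $u_k$ to be a $k$-cycle with $g_k=g_{k-1}u_k$ an involution, that is, $g_k$ an involution whose product $g_{k-1}g_k$ with the involution $g_{k-1}$ is a $k$-cycle. I would count the involutions $g_k\in G_k$ with $g_{k-1}g_k$ a $k$-cycle and aim to show the count is $2^{\lfloor(k-1)/2\rfloor}\lfloor(k-1)/2\rfloor!$, independently of $g_{k-1}$. The tool is that a product of two involutions generates a dihedral group, so the union of their matchings forms paths and cycles. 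The requirement that $g_{k-1}g_k$ be a single $k$-cycle forces the combined graph on $\{1,\dots,k\}$ to be a single path. Counting the matchings $g_k$ that complete the fixed partial matching $g_{k-1}$ (on $k-1$ points, plus the new point $k$) to a single path amounts to orienting and ordering the components, which should give the stated number. Finally, the closed form follows from $\sum_{k=2}^n\lfloor(k-1)/2\rfloor=\lfloor(n-1)^2/4\rfloor$.
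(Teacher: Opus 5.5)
Your treatment of the stabilizer follows the paper. You use Lemmas~\ref{lem:rotation} and~\ref{lem:reflection} for the lower bounds and Theorem~\ref{thm:rigidity} with $c=u_n$ for $|\Stab(C)|\le 2n$. You then show that a reflection forces the cyclic word to equal its reversed, letter-inverted form, and disjointness of the layers gives $g_k^{-1}=g_k$. The letter-count aside is not needed and is slightly off. The letter $g_n$ occurs $n$ times in $M_ng_n$, not $n-1$, and for $n=3$ both letters occur three times, so the counts are not distinct. The layer argument does the work on its own.

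For the count you take a genuinely different route. The paper (Lemma~\ref{lem:invstep}) rewrites the condition as $hch=c^{-1}$ and recognises the admissible $h$ as reflections of the $k$-gon determined by $c$. It then gets $N_k$ by double-counting pairs $(c,x)$ and dividing by the number of involutions with one or two fixed points, using conjugation-equivariance. You instead superpose the matchings of $g_{k-1}$ and $g_k$ and require a single alternating Hamiltonian path, which gives a direct bijective count.

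One claim needs correcting: the count is \emph{not} independent of $g_{k-1}$. It is $0$ for $g_4=(1\,4)$ at $k=5$, so the induction must carry the invariant $\nu(g_{k-1})=\lfloor (k-1)/2\rfloor$. Your path picture supplies this.
\begin{itemize}
\item An interior vertex of an alternating path meets one edge of each colour, so every fixed point of $g_{k-1}$ in $\{1,\dots,k\}$ is an endpoint.
\item Hence $k-2\nu(g_{k-1})\le 2$, which forces $\nu(g_{k-1})=\lfloor (k-1)/2\rfloor$. The fixed points are then $k$, plus one further point when $k$ is even.
\item $g_k$ then has $k-1-\lfloor (k-1)/2\rfloor=\lfloor k/2\rfloor$ transpositions, so the invariant propagates.
\item Read from the endpoint $k$, the path is exactly an ordering and an orientation of the $\lfloor (k-1)/2\rfloor$ transpositions of $g_{k-1}$. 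This gives $2^{\lfloor (k-1)/2\rfloor}\lfloor (k-1)/2\rfloor!$ directly.
\end{itemize}

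Your route buys elementarity and an explicit description of every all-involutive tower as a zigzag path. The paper's route buys an explanation of the count through the reflections of the $k$-gon attached to $u_k$, in line with the dihedral symmetry of the cycle.
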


\begin{proof}
Lemma~\ref{lem:rotation} gives $\langle L_{u_n}\rangle\cong C_n$ inside
$\Stab(C)$, and Theorem~\ref{thm:rigidity} applied with $c=u_n$ gives
$|\Stab(C)|\le2n$. If every $g_k$ is an involution, Lemma~\ref{lem:reflection}
supplies an orientation-reversing element, so $\Stab(C)$ is dihedral of order
exactly $2n$, generated by $L_{u_n}$ and $L_{g_n}$.

Conversely, suppose some $L_h\in\Stab(C)$ reverses orientation, so that
$h\,p_m=p_{a-m}$ for some fixed $a$ and all $m$, where $p_m$ is the vertex at
position $m$ and $p_{m+1}=p_m w_{m+1}$. A left translation preserves the right
multiplier of an edge, since $\{x,xg\}\mapsto\{hx,hxg\}$. The edge of step
$m+1$, of multiplier $w_{m+1}$, is therefore carried to the edge joining
$h\,p_m=p_{a-m}$ to $h\,p_{m+1}=p_{a-m-1}$, whose multiplier read from
$p_{a-m}$ is $w_{a-m}^{-1}$. Hence
\[
  w_{m+1}=w_{a-m}^{-1}\qquad\text{for all }m .
\]
Both sides are generators; inversion preserves the layer, and there is exactly
one generator per layer, so $w_{a-m}^{-1}=w_{a-m}$. Thus every generator
occurring in the word is an involution, and all $n-1$ of them occur. So a
non-involutive tower admits no orientation-reversing element, and by
Theorem~\ref{thm:rigidity} its stabilizer is $\langle L_{u_n}\rangle\cong C_n$.

The count of all-involutive towers is Corollary~\ref{cor:count} below; the
closed form uses $\sum_{k=2}^{n}\lfloor (k-1)/2\rfloor=\lfloor (n-1)^2/4\rfloor$.
\end{proof}
Write $\nu(t)$ for the number of transpositions of an involution $t$. Recall
that an involution of $\Sym\{1,\dots,m\}$ has at most $\lfloor m/2\rfloor$ of
them, with equality exactly for the maximum matchings.

\begin{lemma}[involutive step]
\label{lem:invstep}
Let $k\ge3$ and let $h\in G_{k-1}\setminus G_{k-2}$ be an involution. The
$k$-cycles $c$ on $\{1,\dots,k\}$ for which $hc$ is again an involution are
exactly those satisfying $hch=c^{-1}$. Their number is $0$ unless
$\nu(h)=\lfloor (k-1)/2\rfloor$, in which case it equals
\[
  N_k=2^{\lfloor (k-1)/2\rfloor}\bigl\lfloor (k-1)/2\bigr\rfloor! .
\]
Moreover $hc$ then lies in $G_k\setminus G_{k-1}$ and satisfies
$\nu(hc)=\lfloor k/2\rfloor$.
\end{lemma}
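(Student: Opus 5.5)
The plan is to translate everything into the geometry of a regular $k$-gon whose vertices are labelled by the $k$-cycle $c$. The first step is the equivalence itself, which is a one-line computation. Since $h=h^{-1}$, we have
\[
(hc)^2=\id \iff hchc=\id \iff hch=c^{-1}.
\]
Also $hc\neq\id$, because $c$ is a $k$-cycle while $h\in G_{k-1}$ fixes $k$. So $hc$ is an involution exactly when conjugation by $h$ inverts $c$.

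Next I identify the elements inverting $c$. Lay out $1,\dots,k$ around a $k$-gon in the cyclic order of $c$, so that $c$ is the rotation by one step. The permutations $g$ with $gcg^{-1}=c^{-1}$ are then exactly the $k$ reflections of this $k$-gon: they form one coset of the centralizer $\langle c\rangle$, as in the proof of Theorem~\ref{thm:rigidity}, and each reflection does invert $c$. Their cycle types are standard.
\begin{itemize}
\item For $k$ odd, every reflection fixes exactly one vertex and has $(k-1)/2$ transpositions.
\item For $k$ even, half the reflections fix two opposite vertices and have $(k-2)/2$ transpositions; the other half fix nothing and have $k/2$ transpositions.
\end{itemize}
Since $h$ fixes $k$, it must be the reflection whose axis passes through the vertex $k$. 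That reflection has exactly $\lfloor (k-1)/2\rfloor$ transpositions in both parities. Hence the count is $0$ unless $\nu(h)=\lfloor (k-1)/2\rfloor$.

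For the count, fix $h$ with $\nu(h)=m:=\lfloor (k-1)/2\rfloor$. I count the labelled $k$-gons, with a chosen direction, in which $h$ is the reflection through the vertex labelled $k$. Pinning $k$ at the top removes the rotational ambiguity, so such a directed labelling determines $c$ uniquely, and conversely. The non-axis positions then fall into $m$ mirror pairs. For $k$ even, the vertex opposite $k$ must carry the unique other fixed point of $h$; this point is forced, since $h$ on $k$ letters with $\nu(h)=(k-2)/2$ fixes exactly two points. The $m$ transpositions of $h$ must occupy the $m$ mirror pairs: there are $m!$ ways to assign them, and $2$ ways to orient each within its pair. This gives $N_k=2^m m!$.

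The main obstacle, though only a small one, is making this bijection between $k$-cycles and directed pinned labellings airtight, including the forced antipodal fixed point when $k$ is even. The hypothesis $h\notin G_{k-2}$ plays no role in the count, since it only says that $h$ moves $k-1$.

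Finally, I show $hc\in G_k\setminus G_{k-1}$ and compute $\nu(hc)$. Clearly $hc\in G_k$. Moreover $hc(k)=h(c(k))\neq k$, because $c(k)\neq k$ and $h$ fixes $k$, so $hc$ does not fix $k$ and hence $hc\notin G_{k-1}$. Also $hc$ is again a reflection of the same $k$-gon, and its axis is that of $h$ rotated by half a step. For $k$ odd all reflections have $(k-1)/2=\lfloor k/2\rfloor$ transpositions. For $k$ even the rotated axis passes through edge midpoints, so $hc$ is fixed-point-free with $\nu(hc)=k/2$.
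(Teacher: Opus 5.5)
Your proof is correct, and most of it follows the paper. The following steps match the paper's argument:
\begin{itemize}
\item the equivalence $(hc)^2=\id\iff hch=c^{-1}$;
\item the identification of the elements inverting $c$ with the $k$ reflections of the $k$-gon;
\item the fixed-point argument that forces $\nu(h)=\lfloor (k-1)/2\rfloor$;
\item the half-step rotation of the axis that gives $\nu(hc)=\lfloor k/2\rfloor$.
\end{itemize}
You are slightly more careful than the paper in checking that $hc\neq\id$.

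The genuine difference is the count. The paper double-counts. It argues that the number of admissible $c$ depends only on the cycle type of $h$, by conjugation equivariance. It then counts pairs $(c,x)$ with $x$ inverting $c$: all $k!$ of them for $k$ odd, and the $k!/2$ with $x$ of two-fixed-point type for $k$ even. It divides by the number of involutions of the relevant type, $k\,(k-2)!!$ or $\binom{k}{2}(k-3)!!$, obtaining $(k-1)!!$ or $(k-2)!!$, and only then rewrites these as $2^m m!$.

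You instead fix $h$ and count directly, using the bijection between $k$-cycles and sequences $a_i=c^i(k)$ with $a_0=k$. The bijection you worried about is immediate: applying $hc^ih=c^{-i}$ to $k$ gives $h(a_i)=a_{-i}$, and conversely this relation gives $hch(a_i)=a_{i-1}=c^{-1}(a_i)$. So the condition becomes ``$h$ swaps mirror positions''. The count then reduces to matching the $m$ transpositions of $h$ with the $m$ mirror pairs ($m!$ ways) and orienting each ($2^m$ ways). For $k$ even, the antipodal slot is forced to hold the second fixed point of $h$.

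Your route yields $2^m m!$ at once, treats both parities uniformly, and exhibits the admissible $c$ explicitly. It needs neither the equivariance step nor the census of involutions by number of fixed points. The paper's route explains where the double-factorial forms come from and presents the answer as a ratio of orbit sizes.
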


\begin{proof}
Since $h^2=\id$, $(hc)^2=\id$ if and only if $hchc=\id$, that is $hch=c^{-1}$.

Fix a $k$-cycle $c$. The set of $x\in S_k$ with $xcx^{-1}=c^{-1}$ is a coset of
the centralizer $\langle c\rangle$, hence has $k$ elements, and for $k\ge3$ it
is disjoint from $\langle c\rangle$; the group $\langle c,x\rangle$ is dihedral
of order $2k$, so each of these $k$ elements is an involution. They are the
reflections of the $k$-gon whose cyclic order is given by $c$. Consequently a
solution $c$ exists only if $h$ is such a reflection, which constrains its
fixed points: one when $k$ is odd, none or two when $k$ is even. As $h$ fixes
$k$ and is an involution of $\{1,\dots,k-1\}$, it has $1$ fixed point in
$\{1,\dots,k\}$ when $k$ is odd and $2$ when $k$ is even precisely when
$\nu(h)=\lfloor (k-1)/2\rfloor$; otherwise it has strictly more fixed points and
no $c$ exists.

For the count, note that the number of admissible $c$ depends only on the cycle
type of $h$, because $S_k$ acts transitively by conjugation on each type and the
relation $hch=c^{-1}$ is equivariant. Count the pairs $(c,x)$ with $c$ a
$k$-cycle and $x$ an involution inverting it: there are $(k-1)!$ $k$-cycles,
each with $k$ such $x$, hence $k!$ pairs.

If $k$ is odd, every reflection of a $k$-gon has exactly one fixed point, and
the involutions of $S_k$ with one fixed point number $k\,(k-2)!!$; therefore
\[
  N_k=\frac{k!}{k\,(k-2)!!}=\frac{(k-1)!}{(k-2)!!}=(k-1)!!
      =2^{(k-1)/2}\Bigl(\frac{k-1}{2}\Bigr)! .
\]
If $k$ is even, exactly $k/2$ of the $k$ reflections of a $k$-gon have two fixed
points, so the pairs with $\nu(x)=(k-2)/2$ number $(k-1)!\,k/2=k!/2$, while the
involutions of $S_k$ with two fixed points number $\binom{k}{2}(k-3)!!$; therefore
\[
  N_k=\frac{k!/2}{\binom{k}{2}(k-3)!!}=\frac{(k-2)!}{(k-3)!!}=(k-2)!!
      =2^{(k-2)/2}\Bigl(\frac{k-2}{2}\Bigr)! .
\]
Both expressions equal $2^{\lfloor (k-1)/2\rfloor}\lfloor (k-1)/2\rfloor!$.

Finally $hc$ is a product of a reflection and a rotation of the same $k$-gon,
hence again a reflection; as $c$ is a rotation by one step, $hc$ lies in the
other reflection class when $k$ is even. In both parities $hc$ has the minimal
number of fixed points, that is $\nu(hc)=\lfloor k/2\rfloor$, and $hc$ moves $k$
because $c$ does and $h$ fixes $k$.
\end{proof}

\begin{corollary}
\label{cor:count}
The number of all-involutive towers is
\[
  \prod_{k=2}^{n}2^{\lfloor (k-1)/2\rfloor}\bigl\lfloor (k-1)/2\bigr\rfloor!
  \;=\;2^{\lfloor (n-1)^2/4\rfloor}\prod_{k=2}^{n}\bigl\lfloor (k-1)/2\bigr\rfloor! .
\]
\end{corollary}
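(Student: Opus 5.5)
We count all-involutive towers level by level. By the remark after Definition~\ref{def:tower}, a tower is the same as a free choice of $k$-cycles $u_2,\dots,u_n$, with $g_k=g_{k-1}u_k$. So an all-involutive tower is a sequence in which each $g_k$ is an involution. We build it inductively: at each level $k$ we choose a $k$-cycle $u_k$ such that $g_{k-1}u_k$ is again an involution. The plan is to show that at every level the number of admissible choices is exactly $N_k=2^{\lfloor (k-1)/2\rfloor}\lfloor (k-1)/2\rfloor!$, whatever the earlier choices were. The product formula then follows by multiplying over $k=2,\dots,n$.

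First, the base level $k=2$. Here $g_1=\id$, and $u_2$ must be the unique $2$-cycle $(1\,2)$. This is an involution with $\nu(g_2)=1=\lfloor 2/2\rfloor$. There is exactly one choice, matching $N_2=2^0\cdot0!=1$.

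Next, the inductive step for $k\ge3$. We carry the invariant that $g_{k-1}$ is an involution lying in $G_{k-1}\setminus G_{k-2}$ with $\nu(g_{k-1})=\lfloor (k-1)/2\rfloor$. Membership in the layer holds for any tower, by Lemma~\ref{lem:independent}. Apply Lemma~\ref{lem:invstep} with $h=g_{k-1}$. Since $\nu(h)$ is maximal, the number of $k$-cycles $u_k$ making $g_k=hu_k$ an involution is exactly $N_k$. The same lemma also gives $\nu(g_k)=\lfloor k/2\rfloor$ and $g_k\in G_k\setminus G_{k-1}$, which is precisely the invariant at the next level. Because the count $N_k$ does not depend on which admissible $g_{k-1}$ was reached, the total count is the product $\prod_{k=2}^n N_k$.

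Finally, the closed form. We factor out the powers of $2$ using
\[
\sum_{k=2}^{n}\lfloor (k-1)/2\rfloor=\sum_{j=1}^{n-1}\lfloor j/2\rfloor=\lfloor (n-1)^2/4\rfloor .
\]
This identity can be checked by splitting into $n-1$ even and $n-1$ odd: the sum equals $m^2$ when $n-1=2m$, and $m^2+m$ when $n-1=2m+1$.

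The main obstacle is keeping the maximality invariant on $\nu$ across levels. If some $g_{k-1}$ were an involution with non-maximal $\nu$, Lemma~\ref{lem:invstep} would give zero extensions, and the count would then depend on the path taken. The last clause of Lemma~\ref{lem:invstep} rules this out, together with the explicit check at level $2$ that starts the induction.
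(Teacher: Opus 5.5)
Your proof is correct and follows the paper's argument essentially verbatim. You use the same induction with the invariant that $g_k$ is an involution in $G_k\setminus G_{k-1}$ with $\nu(g_k)=\lfloor k/2\rfloor$, the base case $g_2=(1\,2)$, Lemma~\ref{lem:invstep} to supply exactly $N_k$ invariant-preserving extensions at each level, and the identity $\sum_{k=2}^{n}\lfloor (k-1)/2\rfloor=\lfloor (n-1)^2/4\rfloor$ for the closed form, whose parity check you spell out where the paper leaves it implicit.
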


\begin{proof}
Induction on $k$, the invariant being that $g_k$ is an involution of
$G_k\setminus G_{k-1}$ with $\nu(g_k)=\lfloor k/2\rfloor$. It holds at $k=2$
with $g_2=(1\,2)$. Lemma~\ref{lem:invstep} says that from such a $g_{k-1}$ there
are exactly $N_k$ choices of $u_k$ keeping $g_k=g_{k-1}u_k$ involutive, that all
of them preserve the invariant, and that no other $g_{k-1}$ admits any. Hence
the number of all-involutive towers is $\prod_{k=2}^{n}N_k$. The closed form
follows from $\sum_{k=2}^{n}\lfloor (k-1)/2\rfloor=\lfloor (n-1)^2/4\rfloor$.
\end{proof}

The resulting counts are $1,2,4,32,256,12288,589824$ for $n=2,\dots,8$, in
agreement with exhaustive enumeration.

\section{The pancake graph}
\label{sec:pancake}

Let $r_k$ denote the reversal of the first $k$ letters and
$P_n=\Cay(S_n,\{r_2,\dots,r_n\})$ the pancake graph.

\begin{corollary}
\label{cor:pancake}
Zaks' tower is the only tower inside $P_n$, and the genlex Hamiltonian cycles of
$P_n$ are exactly the $(n-1)!/2$ left translates of Zaks' cycle.
\end{corollary}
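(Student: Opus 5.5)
The plan has two halves: first pin down the tower, then reduce an arbitrary genlex Hamiltonian cycle of $P_n$ to a left translate of Zaks' cycle and count the translates.

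\textbf{The only tower in $P_n$.} The reversal $r_k$ lies in $G_k$ and moves the letter in position $k$, so $r_k\in G_k\setminus G_{k-1}$. Thus the generating set $\{r_2,\dots,r_n\}$ meets each layer in exactly one element. A tower inside $P_n$ must take $g_k\in G_k\setminus G_{k-1}$ from this set, which forces $g_k=r_k$. It remains to check that $(r_2,\dots,r_n)$ really is a tower, i.e.\ that each $u_k=r_{k-1}r_k$ is a $k$-cycle. On $\{1,\dots,k\}$ we have $r_k(i)=k+1-i$, while $r_{k-1}(i)=k-i$ for $i<k$ and $r_{k-1}$ fixes $k$. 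Composing, $i\mapsto i-1$ for $i\ge2$ and $1\mapsto k$ (or the inverse map under the other composition convention). Either way this is a single $k$-cycle, so Zaks' tower is a tower and is the only one in $P_n$.

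\textbf{Reduction to Zaks' cycle.} Let $C$ be a genlex Hamiltonian cycle of $P_n$. By Lemma~\ref{lem:budget} it uses a generator from every layer. Since $P_n$ has only $n-1$ generators, $C$ uses exactly $r_2,\dots,r_n$. To apply Theorem~\ref{thm:class}(ii), which concerns listings, I would cut $C$ into a genlex path. Choose $\pi$ to be the first vertex, in a fixed orientation, of one of the arcs formed by the $(n-1)$-blocks. The $(k-1)$-blocks partition each $k$-block arc into contiguous sub-arcs, so $\pi$ is simultaneously the first vertex of a $k$-block arc for every $k$. Cutting $C$ at $\pi$ therefore gives a listing in which every block is contiguous without wraparound.

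Next, left translation by $\pi^{-1}$ is a graph automorphism and maps $k$-blocks $\sigma G_k$ to $k$-blocks $\pi^{-1}\sigma G_k$, so it preserves genlexity. The translated listing starts at $\id$, and Theorem~\ref{thm:class}(ii) then says it is traversed by the word $M_n$ over $(r_2,\dots,r_n)$. Corollary~\ref{cor:closure} closes it with $r_n$. Hence $\pi^{-1}C$ is Zaks' cycle $Z$, and $C=L_\pi(Z)$. Conversely, every left translate of $Z$ is a genlex Hamiltonian cycle of $P_n$ by the same invariance.

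\textbf{Counting translates.} The translates of $Z$ as edge sets are in bijection with the cosets of $\Stab(Z)$ in $L(S_n)$. Every $r_k$ is an involution, so Theorem~\ref{thm:sym} gives $\Stab(Z)\cong D_n$, of order $2n$. The number of distinct cycles is therefore $n!/(2n)=(n-1)!/2$.

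The main obstacle is the cutting step. One has to make sure the chosen cut point is compatible with all levels at once, and that both orientations of $C$ are covered. Reversing the orientation is harmless: it yields the same edge set, and the reflection in $\Stab(Z)$ already accounts for it in the count.
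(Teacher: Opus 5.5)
Your proof is correct and follows the paper's route: each layer contains exactly one prefix reversal, which forces $g_k=r_k$; Theorem~\ref{thm:class} then makes every genlex Hamiltonian cycle of $P_n$ a left translate of Zaks' cycle; and Theorem~\ref{thm:sym} gives $n!/2n=(n-1)!/2$ translates. Two of your steps make explicit what the paper's short proof leaves implicit. The cut at the first vertex of an $(n-1)$-block arc, followed by translation to $\id$, pins down what the paper compresses into ``the unique genlex Hamiltonian cycle through $\id$'': every Hamiltonian cycle passes through $\id$, and what matters is that $\id$ sits at the start of a block. Your check that $r_{k-1}r_k$ is a $k$-cycle supplies what the paper takes from Theorem~\ref{thm:long}.
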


\begin{proof}
A tower inside $P_n$ must take $g_k\in G_k\setminus G_{k-1}$ among the prefix
reversals; the only prefix reversal moving $k$ and fixing $k+1,\dots,n$ is
$r_k$. So $g_k=r_k$ for all $k$, which is Zaks' tower. By
Theorem~\ref{thm:class} its cycle is the unique genlex Hamiltonian cycle of
$P_n$ through $\id$ with $n-1$ generators; its left translates give
$n!/|\Stab|=n!/2n=(n-1)!/2$ distinct cycles by Theorem~\ref{thm:sym}.
\end{proof}

\section{The extremes of Coxeter length}
\label{sec:coxeter}

A tower is an arbitrary choice of coset representative at each step of the
chain. The $G_k$, however, form a maximal chain of standard parabolic subgroups
of $W(A_{n-1})$, and this chain carries structure that a bare subgroup chain
does not: a length function, longest elements, Coxeter elements. Two towers are
singled out as the extremes of
\[
  \Lambda=\sum_{k=2}^{n}\ell(g_k).
\]

\begin{theorem}[longest]
\label{thm:long}
$\Lambda\le\binom{n+1}{3}$, with equality only for $g_k=w_{0,J_k}$, which is
Zaks' tower.
\end{theorem}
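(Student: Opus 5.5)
Since $g_k\in G_k$ and the length of any element of $G_k=W_{J_k}$ is at most $\ell(w_{0,J_k})=\binom{k}{2}$, the bound is termwise. Summing gives
\[
  \Lambda\le\sum_{k=2}^{n}\binom{k}{2}=\binom{n+1}{3}
\]
by the hockey-stick identity. Equality forces $\ell(g_k)=\binom{k}{2}$ for every $k$. The longest element of a finite Coxeter group is unique, and the length in $S_n$ of an element of $G_k$ equals its length in $W_{J_k}$ because inversions are counted the same way. Hence $g_k=w_{0,J_k}$ for all $k$.

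It remains to check that this choice is a tower and that it is Zaks' tower. In one-line notation $w_{0,J_k}$ is $k\,(k-1)\cdots1\,(k+1)\cdots n$. As an operation on positions it is $i\mapsto k+1-i$ for $i\le k$, which is precisely the prefix reversal $r_k$. It moves $k$, so $r_k\in G_k\setminus G_{k-1}$. The quotient $u_k=r_{k-1}^{-1}r_k=r_{k-1}r_k$ is computed pointwise: $r_k$ sends $i\mapsto k+1-i$, and then $r_{k-1}$ sends $j\mapsto k-j$ for $j\le k-1$ and fixes $k$. The composite sends $i\mapsto i-1$ for $2\le i\le k$ and $1\mapsto k$. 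Whichever composition convention is used, this is the cyclic shift, a single $k$-cycle on $\{1,\dots,k\}$. So $(r_2,\dots,r_n)$ satisfies Definition~\ref{def:tower}, and by Theorem~\ref{thm:class} it is Zaks' tower. The extreme is therefore attained, and attained only there.

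The only point needing care is the last verification: that the longest elements really form a tower, i.e.\ that the successive quotients are $k$-cycles rather than merely lying in $G_k$. The explicit cyclic-shift computation settles this, and the rest is bookkeeping.
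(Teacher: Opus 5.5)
Your proof is correct and is essentially the paper's argument: the termwise bound $\ell(g_k)\le\ell(w_{0,J_k})=\binom{k}{2}$, the hockey-stick sum, and uniqueness of the longest element to force $g_k=w_{0,J_k}=r_k$. The only difference is in checking that this choice is a tower: the paper cites that $u_k=w_{0,J_{k-1}}w_{0,J_k}$ is a Coxeter element of $W_{J_k}$, hence a $k$-cycle, whereas you verify directly that $r_{k-1}r_k$ is the cyclic shift $1\mapsto k$, $i\mapsto i-1$.
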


\begin{proof}
Since $g_k\in G_k=W_{J_k}$ we have $\ell(g_k)\le\ell(w_{0,J_k})=\binom{k}{2}$,
and $\sum_{k=2}^n\binom k2=\binom{n+1}{3}$. Equality forces
$g_k=w_{0,J_k}$ for every $k$. That this is a tower is the identity that
$u_k=w_{0,J_{k-1}}w_{0,J_k}$ is a Coxeter element of $W_{J_k}$, in particular a
$k$-cycle; and $w_{0,J_k}=r_k$, so this is Zaks' tower.
\end{proof}

For this tower $\ell(u_k)=k-1$ and $\ell(g_k)=\ell(g_{k-1})+\ell(u_k)$, so
$w_0=u_2u_3\cdots u_n$ is a reduced factorization: the staircase word. We
conjecture it is the only tower with that property.

\begin{theorem}[brick]
\label{thm:brick}
Let $\beta_k=s_{k-1}s_{k-3}\cdots$ be the product of one colour class of the
bipartite $2$-colouring of the diagram of $W_{J_k}$, so that $\beta_{k-1}$ is
the other class. Then $(\beta_2,\dots,\beta_n)$ is a tower,
$u_k=\beta_{k-1}\beta_k=c_-c_+$ is the bipartite Coxeter element of Steinberg's
factorization \cite{steinberg,coxeter}, every $\beta_k$ is an involution,
$\ell(\beta_k)=\lfloor k/2\rfloor$, and $\Lambda=\lfloor n^2/4\rfloor$.
\end{theorem}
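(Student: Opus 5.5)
The plan is to verify each claim of Theorem~\ref{thm:brick} directly from the Coxeter presentation of $W_{J_k}$, with one appeal to Steinberg's theorem for the cycle structure. Set $\beta_k=s_{k-1}s_{k-3}\cdots$, the product over indices $i\le k-1$ with $i\equiv k-1 \pmod 2$, and $\beta_1=\id$. The factors of $\beta_k$ pairwise commute, since their indices differ by at least $2$.

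\emph{Involutions and length.} The product $\beta_k$ is a product of $\lfloor k/2\rfloor$ disjoint transpositions $(i\ i{+}1)$, so it is an involution. A product of commuting simple reflections with distinct indices is reduced, so $\ell(\beta_k)=\lfloor k/2\rfloor$. It lies in $G_k$ and moves $k$, because the factor $s_{k-1}=(k{-}1\ k)$ occurs. Hence $\beta_k\in G_k\setminus G_{k-1}$.

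\emph{Tower condition.} I take $g_k=\beta_k$, so that $u_k=g_{k-1}^{-1}g_k=\beta_{k-1}\beta_k$, using that $\beta_{k-1}$ is an involution. The indices of $\beta_{k-1}$ are $k-2,k-4,\dots$ and those of $\beta_k$ are $k-1,k-3,\dots$. Together they cover $\{1,\dots,k-1\}$, each index exactly once, and the two sets are the two colour classes of the path diagram of $W_{J_k}$. So $u_k$ is the product of all simple reflections of $W_{J_k}$ in some order, that is, a Coxeter element, namely Steinberg's bipartite element $c_-c_+$. Every Coxeter element of $W(A_{k-1})=S_k$ is a $k$-cycle. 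I would cite this standard fact, or check it directly: all Coxeter elements are conjugate in a tree diagram, and $s_1s_2\cdots s_{k-1}$ is visibly a $k$-cycle. Alternatively I would prove it by induction, noting that multiplying a $(k{-}1)$-cycle on $\{1,\dots,k-1\}$ by $s_{k-1}$ on the appropriate side yields a $k$-cycle. By Definition~\ref{def:tower} this makes $(\beta_2,\dots,\beta_n)$ a tower. It is consistent with Lemma~\ref{lem:invstep}, since $\nu(\beta_{k-1})=\lfloor (k-1)/2\rfloor$.

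\emph{Total length.} Finally $\Lambda=\sum_{k=2}^n\lfloor k/2\rfloor$, which equals $\lfloor n^2/4\rfloor$ by a routine parity check or induction: passing from $n-1$ to $n$ adds $\lfloor n/2\rfloor=\lfloor n^2/4\rfloor-\lfloor (n-1)^2/4\rfloor$. The only step needing genuine care is that $u_k$ is a single $k$-cycle. I expect no real obstacle there beyond quoting the Coxeter-element fact carefully, or doing the short induction via $u_k(k)$ and the orbit of $k$ as in Lemma~\ref{lem:rotcoset}.
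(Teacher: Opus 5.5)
Your proposal is correct and follows the paper's proof. The colour-class reflections commute, so $\beta_k$ is an involution of length $\lfloor k/2\rfloor$; $u_k=\beta_{k-1}\beta_k$ uses each simple reflection of $W_{J_k}$ exactly once, so it is a Coxeter element and hence a $k$-cycle; and $\Lambda=\sum_{k=2}^n\lfloor k/2\rfloor=\lfloor n^2/4\rfloor$. Your extra checks (that $\beta_k$ moves $k$, and why Coxeter elements of $S_k$ are $k$-cycles) fill in details the paper leaves implicit.
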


\begin{proof}
The colour classes of a path diagram consist of pairwise commuting simple
reflections, so each $\beta_k$ is an involution and $\ell(\beta_k)$ equals the
number of factors, namely $\lfloor k/2\rfloor$. Their product
$u_k=\beta_{k-1}\beta_k$ is the bipartite Coxeter element of $W_{J_k}$, hence a
$k$-cycle; so $(\beta_k)$ is a tower. Finally
$\Lambda=\sum_{k=2}^n\lfloor k/2\rfloor=\lfloor n^2/4\rfloor$.
\end{proof}

\begin{conjecture}
\label{conj:min}
$\Lambda\ge\lfloor n^2/4\rfloor$ for every tower, with equality only for the
brick tower.
\end{conjecture}

The conjecture has been verified exhaustively for $n\le8$, a range comprising
$125\,411\,328\,000$ towers at $n=8$; throughout it the minimum of $\Lambda$ is
$\lfloor n^2/4\rfloor$ and is attained by the brick tower alone.

\begin{figure}[t]
\centering
\begin{tabular}{@{}r@{\ }l@{\hspace{1.2em}}r@{}}
\multicolumn{3}{c}{\emph{brick tower }$\beta_k$}\\[4pt]
$k=2$ & \coxrow{2}{1} & $\ell=1$\\[2pt]
$k=3$ & \coxrow{3}{1} & $\ell=1$\\[2pt]
$k=4$ & \coxrow{4}{1} & $\ell=2$\\[2pt]
$k=5$ & \coxrow{5}{1} & $\ell=2$\\[2pt]
$k=6$ & \coxrow{6}{1} & $\ell=3$\\[2pt]
$k=7$ & \coxrow{7}{1} & $\ell=3$\\[2pt]
\cline{2-3}
      &               & $\Lambda=12$
\end{tabular}
\hspace{2.2em}
\begin{tabular}{@{}r@{\ }l@{\hspace{1.2em}}r@{}}
\multicolumn{3}{c}{\emph{Zaks tower }$w_{0,J_k}$}\\[4pt]
$k=2$ & \coxrow{2}{0} & $\ell=1$\\[2pt]
$k=3$ & \coxrow{3}{0} & $\ell=3$\\[2pt]
$k=4$ & \coxrow{4}{0} & $\ell=6$\\[2pt]
$k=5$ & \coxrow{5}{0} & $\ell=10$\\[2pt]
$k=6$ & \coxrow{6}{0} & $\ell=15$\\[2pt]
$k=7$ & \coxrow{7}{0} & $\ell=21$\\[2pt]
\cline{2-3}
      &               & $\Lambda=56$
\end{tabular}
\caption{The two towers, $n=7$. Row $k$ is the Coxeter diagram of $W_{J_k}$,
whose nodes are $s_1,\dots,s_{k-1}$; a node is filled when that reflection
occurs in $g_k$.}
\end{figure}

The condition of Theorem~\ref{thm:class} is weaker than requiring $u_k$ to be
a Coxeter element: there are $2^{k-2}$ Coxeter elements among the $(k-1)!$
$k$-cycles. Both extremal towers nevertheless lie in the Coxeter class, and
both are all-involutive, hence dihedral. Since the Coxeter number is $h=n$ here,
\[
  \Stab(C)=\langle c,\,w_0\rangle\quad\text{and}\quad
  \Stab(C)=\langle c_-,\,c_+\rangle,
\]
the second being the dihedral group of order $2h$ acting in the Coxeter plane
\cite{coxeter}. Both towers factor each $u_k$ into two involutions; they are the
two extremes in length of doing so along the chain.

The quotients $u_k$ may be chosen independently --- at each level, either the
rotation of Zaks' tower or the bipartite factor --- but length and involutivity
are properties of $g_k=g_{k-1}u_k$, hence of the entire history of choices.
Among the $2^{n-2}$ hybrid towers obtained by mixing the two, only the two pure
towers are all-involutive; every mixed tower is therefore cyclic.

\section{Open problems}

\begin{question}
Is Conjecture~\ref{conj:min} true? The difficulty is that the bound is not
pointwise, unlike the maximum, where $\ell(g_k)\le\ell(w_{0,J_k})$ settles
everything level by level. For $n=6$ the tower $g_2=(1\,2)$, $g_3=(2\,3)$,
$g_4=(1\,3\,4)$, $g_5=(1\,2\,4\,5\,3)$, $g_6=(5\,6)$ has $\ell(g_6)=1$, far
below $\lfloor 6/2\rfloor$, and $\Lambda=11$: the shortfall is paid one level
down.
\end{question}

\begin{question}
For a chain $\{1\}=G_1<\cdots<G_N=G$ of finite groups with
$m_k=[G_k:G_{k-1}]$, the recursion $M_k=(M_{k-1}g_k)^{m_k-1}M_{k-1}$ lists $G$
with all cosets contiguous as soon as $\langle u_k\rangle$ acts simply
transitively on $G_k/G_{k-1}$ --- the non-normal analogue of a polycyclic
series --- and the symmetry statement survives. This recovers the reflected Gray
code, Knuth's mixed-radix codes \cite[\S7.2.1.1]{knuth4a} and Tompkins--Paige
\cite{tompkins}. Does everything above transport to type $B$, that is, to signed
permutations and burnt pancakes \cite{sawadawilliams1}? The numerology is that
$[W_{J_k}:W_{J_{k-1}}]$ must be met by an element of that order: it equals the
Coxeter number of $W_{J_k}$ in types $A$ and $B$ but not in type $D$.
\end{question}

\begin{question}
The all-involutive towers produce
$2^{\lfloor(n-1)^2/4\rfloor}\prod_{k=2}^{n}\lfloor(k-1)/2\rfloor!$ Hamiltonian
cycles on $S_n$ whose left-regular stabilizer is dihedral of order $2n$. Are there others, and must
the order be $2n$? The question is meant with both hypotheses dropped: over all
Cayley graphs of $S_n$ and all Hamiltonian cycles in them, which carry a
dihedral group of left translations?
\end{question}



\begin{thebibliography}{99}

\bibitem{zaks} S. Zaks, \emph{A new algorithm for generation of permutations},
BIT 24 (1984) 196--204.

\bibitem{sawadawilliams1} J. Sawada, A. Williams, \emph{Greedy flipping of
pancakes and burnt pancakes}, Discrete Appl. Math. 210 (2016) 61--74.

\bibitem{walsh} T. Walsh, \emph{Generating Gray codes in $O(1)$ worst-case time
per word}, DMTCS 2003, LNCS 2731, 73--88.

\bibitem{knuth4a} D. E. Knuth, \emph{The Art of Computer Programming}, vol. 4A,
Addison--Wesley, 2011, \S7.2.1.1--7.2.1.2 and \S7.2.1.7.

\bibitem{sims} C. C. Sims, \emph{Computational methods in the study of
permutation groups}, in: Computational Problems in Abstract Algebra (J. Leech,
ed.), Pergamon, Oxford, 1970, 169--183.

\bibitem{ordsmith} R. J. Ord-Smith, \emph{Generation of permutation sequences},
Comm. ACM 10 (1967) 452; 12 (1969) 638; Comp. J. 14 (1971) 136--139.

\bibitem{ehrlich} G. Ehrlich, \emph{Loopless algorithms for generating
permutations, combinations, and other combinatorial configurations}, J. ACM 20
(1973) 500--513.

\bibitem{kokosinski} Z. Kokosi\'nski, \emph{On generation of permutations
through decomposition of symmetric groups into cosets}, BIT 30 (1990) 583--591.

\bibitem{mutze} T. M\"utze, \emph{Combinatorial Gray codes --- an updated
survey}, Electron. J. Combin. DS26 (2023), arXiv:2202.01280.

\bibitem{tompkins} C. Tompkins, \emph{Machine attacks on problems whose
variables are permutations}, Proc. Sympos. Appl. Math. 6 (1956) 195--211.

\bibitem{coxeter} H. S. M. Coxeter, \emph{The product of the generators of a
finite group generated by reflections}, Duke Math. J. 18 (1951) 765--782.

\bibitem{steinberg} R. Steinberg, \emph{Finite reflection groups}, Trans. Amer.
Math. Soc. 91 (1959) 493--504.

\bibitem{gmm} P. Gregor, A. Merino, T. M\"utze, \emph{The Hamilton compression
of highly symmetric graphs}, Ann. Comb. 28 (2024) 379--437.

\bibitem{sharvitbenjo} Y. Sharvit, O.-H. Benjo, \emph{The left-regular
stabilizer of Zaks' Hamiltonian cycle in the pancake graph}, arXiv:2607.04658,
2026.

\bibitem{abulafia} Y. Sharvit, \emph{The Abulafia graph: the order of the
tseruf, Zaks' formula, and the emergence of caustics}, Zenodo, 2026,
\texttt{doi:10.5281/zenodo.21227448}.

\end{thebibliography}
\end{document}